\documentclass[11pt]{amsart}

\usepackage{latexsym,amsfonts,amssymb,epsfig,verbatim}
\usepackage{amsmath,amssymb,latexsym,graphics,textcomp,hyperref,enumerate} 
\usepackage[foot]{amsaddr}
\usepackage[utf8]{inputenc}
\usepackage{amssymb}
\usepackage{comment}
 \usepackage[T1]{fontenc}
 \usepackage{lmodern}
 \usepackage{graphicx} 
\usepackage{xspace} 
  
\usepackage[usenames,dvipsnames]{xcolor}

\usepackage{tikz}
\usepackage{pgfplots}

\usepackage{bm}
\usepackage{amsfonts}
\usepackage{amsmath}
\usepackage{enumerate}
\usepackage{eucal}
\usepackage{amsthm}
\usepackage{thmtools}
\usepackage[capitalize]{cleveref}
\newtheorem{theorem}{Theorem}[section]
\newtheorem{proposition}[theorem]{Proposition}
\newtheorem{lemma}[theorem]{Lemma}
\newtheorem{corollary}[theorem]{Corollary}
\theoremstyle{definition}
\newtheorem{definition}[theorem]{Definition}

\newcommand{\teich}{\mathcal{T}}

\newcommand{\ml}{\mathcal{ML}}
\newcommand{\pml}{\mathcal{PML}}
\newcommand{\reals}{\mathbb{R}}
\newcommand{\naturals}{\mathbb{N}}

\renewcommand{\th}{\mathrm{Th}}

\newcommand{\chr}{\mathcal{CR}}
\newcommand{\ie}{i.e.\ }
\newcommand{\mute}[1] {}

\newcommand{\Fr}{\mathrm{Fr}}

\newcommand{\MF}{\ensuremath{\mathcal{MF}}\xspace}

\renewcommand{\v}{\mathbf v}

\newcommand{\cat}{\operatorname{cat}}
\newcommand{\Int}{\mathrm{Int}}
\newcommand{\codim}{\mathrm{codim}}

\title[Teichm\"{u}ller space with Thurston's Finsler metric]{The combinatorial structure of the unit tangent spheres and cotangent spheres of Teichm\"{u}ller space with Thurston's Finsler metric}
\author{Ken'ichi Ohshika}

\author{Athanase Papadopoulos}
\address{KO: Department of Mathematics, Gakushuin University, Mejiro, Toshima-ku, 171-8588 Tokyo, Japan, and Max-Planck Institute for Mathematics, Vivatsgasse 7, 53111 Bonn
Germany,  ORCID: 0000-0001-5822-7425}
\email{ohshika@math.gakushuin.ac.jp ;  papadop@math.unistra.fr}
\address{AP: Institut de Recherche Mathématique Avancée, 
 CNRS et Universit\'e de Strasbourg, 
7 rue Ren\'e Descartes, 67084, Strasbourg, France, and Max-Planck Institute for Mathematics, Vivatsgasse 7, 53111 Bonn
Germany,  ORCID:  0000-0002-0880-0307}
\date{}

\begin{document}
\maketitle
\begin{abstract}

We prove several new results on the combinatorial structures of the unit spheres of the norms induced by Thurston's metric  on the tangent and cotangent spaces of the Teichm\"uller space of  a closed surface of negative Euler characteristic.
These results include a formula for the dimension of every face of a unit sphere in the tangent space in terms of an    invariant of the chain-recurrent lamination representing the face.  We then prove that the combinatorial structure of such a unit sphere is independent of the underlying point in Teichm\"uller space. Provided the genus of the surface is $\geq 2$, we show that there is a natural isomorphism between the extended mapping class group of the surface and the group of combinatorial automorphisms of such a unit sphere. In the case of genus 2, we obtain a natural epimorphism between the two groups whose kernel is the class of the hyperelliptic involution. Regarding the unit spheres of Thurston's metric in the cotangent spaces, we obtain a formula describing the codimensions of faces of such a sphere in terms of corresponding projective measured laminations. We then give a necessary and sufficient condition for a face  to be exposed, and of a face to correspond to a projectively weighted multi-curve.  Some of the results obtained answer open questions.

\noindent {\bf Keywords.} Teichm\"uller space, mapping class group, Thurston's metric, geodesic lamination, chain-recurrent lamination, Finsler geometry, convexity.

\noindent {\bf Mathematics Subject Classification.}  32G15, 30F60, 30F10, 52A21.

\end{abstract}

\section{Introduction}
Thurston's asymmetric metric on Teichm\"{u}ller spaces of a hyperbolic surface $S$  is a natural analogue of Teichm\"{u}ller's more classical metric whose setting is conformal geometry, with Lipschitz maps, in the hyperbolic setting, replacing the quasi-conformal maps of the conformal setting.
Thurston's  metric, like Teichm\"uller's,  is Finsler. This means that it is induced by a family of norms on tangent spaces to Teichm\"{u}ller space.   The unit spheres of Thurston's norms have a (non-locally finite) polyhedral-like structure, whereas  the unit spheres of these norms are smooth in Teichm\"uller's setting, and this gives rise to  new kinds of questions regarding Thurston's  metric.
In this paper, we work with the unit spheres with respect to the norms corresponding to Thurston's metric on tangent spaces and cotangent spaces.

 In \cite{BOP}, we gave a geometric interpretation of the faces of the unit spheres in the tangent spaces.
In particular, we showed that each face of a unit tangent sphere corresponds to a chain-recurrent geodesic lamination which is infinitesimally maximally stretched by the vectors lying on that face.
In the present paper, we give a more complete description of these unit spheres. We start by establishing a formula for the dimension of a face.
This result is  closely related to a result of Thurston's in  \cite[Theorem 10.1]{ThM}, where he showed that almost all points in a unit tangent sphere lie on facets, \ie faces of the highest possible dimension, and that these facets are in natural one-to-one correspondence with the simple closed geodesics on the surface.

Our first result, \cref{dimension}, is a generalisation of Thurston's result to faces of any dimension, giving a formula for the dimension of such a face in terms of a numerical value associated with a  chain-recurrent lamination, which we call its complexity. Our argument is along the same line as Thurston's, involving the differential of the cataclysm coordinates of Teichm\"uller space introduced by Thurston in the same paper.

We apply this theorem to prove two remarkable properties of the unit tangent sphere.
The first property concerns the invariance of the combinatorial type of the unit tangent sphere with respect to the base point (\cref{combinatorially invariant}).
To be more precise, we prove that for any two points $x$ and $y$ in 
$\teich(S)$, there is a combinatorial automorphism (\ie a homeomorphism preserving the face structures) between the unit tangent spheres $\mathcal S_x$ and  $\mathcal S_y$ with respect to the norms corresponding to Thurston's metric which preserves the natural combinatorial structure induced by a labelling of the points of these spheres by chain-recurrent geodesic laminations.  This result answers a question raised by Norbert A'Campo in 2022.
The second application, \cref{combinatorial auto},  is a rigidity result which says the following: for any point $x\in \mathcal{T}(S)$, there is a natural epimorphism from the mapping class group of the surface
to the  group of combinatorial automorphisms of the unit sphere $\mathcal S_x$. 
In the case when $S$ has genus $2$, this epimorphism is not injective with kernel generated by the hyperelliptic involution, and otherwise the epimorphism is in fact an isomorphism.
 This result, which relies on the famous Ivanov theorem \cite{Iv},  is in the spirit of several  rigidity results concerning mapping class group actions on spaces of geodesic laminations, obtained in the last few years, see \cite{CPP, Ohshika2013, OP-CRAS, OP2019, Pap2008}. These results are formally similar to each other, but they are different in content and they use different features of the spaces of measured laminations. 

In the last section, we turn to the unit cotangent spheres with respect to Thurston's metric.
In \cite{HOP}, the notion of codimension for faces of convex spheres was introduced, and asked how we can determine the codimension of faces of unit cotangent spheres.
In \cref{codimension of face}, we present a formula describing the codimensions of faces in terms of corresponding projective measured laminations, which gives an answer to the question.
We also give a necessary and sufficient condition on a face to be exposed  (\cref{exposed}), and  for a face to correspond to a projectively weighted multi-curve, in terms of the sum of dimension and codimension (\cref{sum}).

This paper was written during the authors' visit to the Max Planck Institute for Mathematics in Bonn.
They are deeply grateful to the MPIM for its hospitality and support throughout their stay.

\section{Preliminaries}
\subsection{Generalities}
Throughout this paper, we denote by $S$ an oriented closed surface of genus $\geq 2$.
Such a surface $S$ admits hyperbolic metrics. The Teichm\"{u}ller space  $\teich(S)$ of $S$ is  the set of hyperbolic metrics on $S$ modulo isotopy.
We endow $\teich$ with  the quotient topology of the space of hyperbolic metrics on $S$.

As we shall deal with unit spheres in tangent spaces and cotangent spaces at points in Teichm\"uller space equipped with the Thurston metric, and as these spheres are convex spheres in their respective ambient spaces, we recall here some terminology of convex geometry which will be used later.

Let $V$ be a finite-dimensional vector space.
A closed convex subset $\mathcal{C}$ of $V$ is homeomorphic to a geometric ball in $V$ and is called a convex ball. 
 A proper subset $F$ of $\mathcal{C}$ is called a \emph{face} 
of $\mathcal{C}$ if any segment in $\mathcal{C}$ intersecting $F$ at an interior point is entirely contained in $F$. A face of $\mathcal{C}$ necessarily sits in the boundary of $\mathcal{C}$. 
A face is said to be {\em exposed} when it is the intersection of a hyperplane (\ie a codimension-1 affine subspace of $V$) with $\mathcal{C}$.

A point $x$ in $\mathcal{C}$ is said to be {\em extreme} when there is no segment in $\partial\mathcal{C}$  containing $x$ in its interior.
The boundary $\partial \mathcal{C}$ of $\mathcal C$ is called a convex sphere. A  convex sphere is homeomorphic to a geometric sphere in a Euclidean vector space. Each face of $\mathcal{C}$ is a closed subset  of $\mathcal{C}$ (this uses the fact that the ambient vector space is finite-dimensional). The boundary of $\mathcal{C}$ is a union of faces. The dimension of a face is defined to be the dimension of its affine hull. The extreme points of $\mathcal{C}$ are its $0$-dimensional faces.

%

The set of all faces of a convex ball (or equivalently of its boundary convex sphere) is equipped with the Hausdorff topology. This is the topology induced by the Hausdorff metric on the set of all closed subsets of the convex ball (or of the convex sphere) regarded as a compact metric space.

Let $\mathcal{S}$ is a convex sphere in some finite-dimensional vector space. We record some elementary results on the topology of faces in $\mathcal{S}$ which we shall use later.

The first result follows easily  from the definition of a face.

\begin{lemma}
\label{limit of faces}
A Hausdorff limit of an arbitrary sequence of faces  in $\mathcal{S}$ is itself a face.
\end{lemma}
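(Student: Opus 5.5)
Let $F_n$ be faces of $\mathcal S=\partial\mathcal C$ converging in the Hausdorff topology to a closed set $F$. The plan is to verify the defining property of a face directly for $F$: any segment in $\mathcal C$ meeting $F$ at an interior point of the segment lies entirely in $F$. Properness will come for free, since $F\subset\mathcal S$ ($\mathcal S$ is closed and each $F_n\subset\mathcal S$), so $F\neq\mathcal C$. Nonemptiness holds because Hausdorff limits of nonempty compact sets are nonempty.

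First, reduce to a statement about points. It suffices to show that whenever $x,y\in\mathcal C$ and $z=(1-t)x+ty\in F$ with $0<t<1$, both endpoints $x,y$ lie in $F$. Indeed, if the endpoints of a segment through $z$ lie in $F$, then every point of the segment lies in $F$: applying the same statement to any subsegment $[x,w]$ or $[w,y]$ containing $z$ in its interior puts each point $w$ of the segment in $F$.

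The key step is then to approximate. Pick $z_n\in F_n$ with $z_n\to z$. We would like segments in $\mathcal C$ through $z_n$, with $z_n$ in their interior, whose endpoints converge to $x$ and $y$. The naive choice $x_n=x+(z_n-z)$, $y_n=y+(z_n-z)$ may leave $\mathcal C$, and this is the main obstacle. To get around it, I would use the fact that a face $F_n$ is contained in a supporting hyperplane, i.e.\ it is the intersection of $\mathcal C$ with the minimal face containing it. A cleaner route avoids perturbing the segment altogether: take a supporting functional $\phi_n$ (unit norm) with $\phi_n\le c_n$ on $\mathcal C$ and $\phi_n=c_n$ on $F_n$. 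By compactness, pass to a subsequence with $\phi_n\to\phi$ and $c_n\to c$. Then $\phi\le c$ on $\mathcal C$ and $\phi=c$ on $F$, so $\phi(x)=\phi(y)=c$ by linearity.

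This only yields that $x,y$ lie in the exposed face $\{\phi=c\}\cap\mathcal C$, not in $F$ itself, so one must iterate, and I expect this to be the real difficulty. I would argue by induction on dimension. Let $H=\{\phi=c\}$ and $G=H\cap\mathcal C$. For large $n$, the faces $F_n$ need not lie in $G$. Instead I would use the elementary characterization: $F$ is a face iff for every $z\in F$, the set $\{v:\ z\pm\varepsilon v\in\mathcal C\ \text{for some }\varepsilon>0\}$ spans directions tangent to $F$. Concretely, let $L_n$ be the affine hull of $F_n$. Since $F_n$ is a face, $F_n=L_n\cap\mathcal C$. Passing to a subsequence, $L_n\to L$ in the Grassmannian of affine subspaces (dimensions eventually constant). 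Then $F\subset L\cap\mathcal C$.

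With $L$ in hand, the remaining claim to establish is that $\dim F=\dim L$ and that $F$ is open-dense in $L\cap\mathcal C$ appropriately. Here some care is needed. The honest fix is to replace the dimension drop by passing to the minimal face $\Phi$ of $\mathcal C$ containing $F$ and checking that $F=\Phi$. For this I would show that the relative interior of $\Phi$ meets $F$, using the fact that the relative interiors of $F_n$ converge to points of $F$. Then the segment property follows from the standard fact that a face containing a relative interior point of $\Phi$ contains $\Phi$.
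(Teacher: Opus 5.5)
There is a genuine gap in your last step, and it cannot be closed. Showing that $F$ meets the relative interior of the minimal face $\Phi$ containing it is not the issue; it holds in the example below as well. The ``standard fact'' that a face containing a relative interior point of $\Phi$ contains $\Phi$ applies only to faces, and whether $F$ is a face is exactly what you are trying to prove. The equality $F=\Phi$ is the whole content of the lemma, so that step is circular. Your earlier steps (limits of supporting functionals, limits $L$ of affine hulls) only place $F$ inside an exposed face or inside $L\cap\mathcal C$. Both can be strictly larger than $F$, and neither forces $F$ to absorb a segment of $\mathcal C$ through one of its points. The obstacle you spotted at the start is the real failure, not a technicality: there may be no segment of $\mathcal C$ having $z_n\in F_n$ in its interior.

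In fact the statement is false for an arbitrary convex sphere, which is the generality in which the paper states it; the paper only says it ``follows easily from the definition''. Let $\mathcal C\subset\reals^3$ be the convex hull of the circle $\{(s,t,0) : (s-1)^2+t^2=1\}$ and the segment $I=\{(0,0,u) : |u|\le 1\}$. Project to the $(s,t)$-plane, which maps $\mathcal C$ onto the disc bounded by the circle. This shows that every point $p\neq 0$ of the circle is an extreme point of $\mathcal C$, so $\{p\}$ is a face contained in $\mathcal S=\partial\mathcal C$. As $p\to 0$ along the circle, these faces converge in the Hausdorff topology to $\{0\}$. But $\{0\}$ is not a face, because $0$ is the midpoint of $I\subset\mathcal C$. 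In your notation, $\Phi=I$ (it is cut out by the supporting plane $\{s=0\}$), and $F=\{0\}$ meets its relative interior, yet $F\neq\Phi$.

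So any correct version must use extra structure of the particular sphere. For $\mathcal S_x$, for instance, suppose $F_{\lambda_i}\to K$. Pass to a subsequence with $\lambda_i\to\mu$ in the compact space $\chr(S)$, and apply the ``only if'' half of \cref{face Hausdorff} to get $K=F_\mu$.
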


The next two results are simple, but need proofs.
\begin{lemma}
\label{n-dim limit}
 A  face in $\mathcal{S}$ which is a Hausdorff limit of a sequence of  $n$-dimensional faces has dimension at most $n$.
\end{lemma}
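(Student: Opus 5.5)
Let $F_i$ be a sequence of $n$-dimensional faces of $\mathcal{S}$ converging in the Hausdorff topology to a face $F$ (a face by \cref{limit of faces}). I will argue by contradiction: suppose $\dim F \geq n+1$. Then $F$ contains $n+2$ affinely independent points $p_0,\dots,p_{n+1}$, and I may fix $\epsilon>0$ such that any points $q_0,\dots,q_{n+1}$ with $|q_j-p_j|<\epsilon$ are still affinely independent. This holds because affine independence is an open condition: the determinant of the matrix of vectors $q_j-q_0$, restricted to a suitable $(n+1)$-dimensional coordinate projection, is continuous and nonzero at the $p_j$.

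By Hausdorff convergence, for $i$ large each $p_j$ lies within $\epsilon$ of some point $q_j^{(i)}\in F_i$. The $n+2$ points $q_0^{(i)},\dots,q_{n+1}^{(i)}$ are then affinely independent and lie in $F_i$. Hence the affine hull of $F_i$ has dimension at least $n+1$, contradicting $\dim F_i=n$. Therefore $\dim F\le n$.

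The only step needing care is the openness of affine independence, which is standard finite-dimensional linear algebra. So I expect no real obstacle. The key observation is that Hausdorff convergence lets every point of the limit be approximated by points of $F_i$. Note that we never need the converse direction of Hausdorff convergence (points of $F_i$ being close to $F$).
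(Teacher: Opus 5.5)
Your proof is correct and follows essentially the same route as the paper's. Both arguments pick $n+2$ affinely independent points in the limit face, approximate them by points of $F_i$ using Hausdorff convergence, and use the openness of affine independence to contradict $\dim F_i = n$.
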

\begin{proof}
Let $(F_i)$ be a sequence of $n$-dimensional faces of $\mathcal{S}$, and suppose, seeking a contradiction, that it converges to a face $F$ of dimension greater than $n$.
Then $F$ contains $n+2$ points $p_0, \dots , p_{n+1}$ such that the vectors $\overrightarrow{p_0 p_j}\ (j=1, \dots , n+1)$ are linearly independent.
Since $F_i$ converges to $F$ in the Hausdorff topology, $F_i$ contains points $p_0(i), \dots , p_{n+1}(i)$ such that $\lim_{i\to \infty} p_j(i)=p_j$ for $j=1, \dots , n+1$.
Then the vectors $\overrightarrow{p_0(i) p_j(i)}$ are linearly independent for sufficiently large $i$, which contradicts the assumption that $F_i$ is $n$-dimensional.
\end{proof}

\begin{proposition}
\label{interior convergence}
Let $(F_i)$ be a sequence of $n$-dimensional faces of $\mathcal{S}$, and suppose that there is a sequence of points $(p_i \in F_i)$ which converges to an interior point $p$ of an $n$-dimensional face $F$ of $\mathcal{S}$.
Then $(F_i)$ converges to $F$ in the Hausdorff topology
\end{proposition}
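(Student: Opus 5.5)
We argue by compactness. The set of closed subsets of the compact sphere $\mathcal S$ is compact in the Hausdorff topology, so it suffices to show that every Hausdorff-convergent subsequence of $(F_i)$ has limit $F$. So we pass to a subsequence converging to some closed set $F_\infty$, and we aim to show $F_\infty = F$.

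By \cref{limit of faces}, $F_\infty$ is a face. By \cref{n-dim limit}, its dimension is at most $n$. Since $p_i \in F_i$ and $p_i \to p$, we have $p \in F_\infty$. Now $F_\infty$ and $F$ are both faces containing $p$, and $p$ lies in the relative interior of $F$.

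We first show $F \subset F_\infty$. Take any $q \in F$ with $q \neq p$. Because $p$ is a relative interior point of $F$, the segment from $q$ through $p$ can be extended slightly beyond $p$ inside $F$, to a point $q'$. Then $p$ is an interior point of the segment $[q,q']\subset\mathcal C$. This segment meets the face $F_\infty$ at the interior point $p$, so by the definition of a face the whole segment $[q,q']$ lies in $F_\infty$. In particular $q \in F_\infty$, and hence $F\subset F_\infty$.

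Next we compare dimensions. $F$ has dimension $n$, it is contained in $F_\infty$, and $\dim F_\infty\le n$. Hence the affine hulls of $F$ and $F_\infty$ coincide.

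It remains to show $F_\infty \subset F$. This is the step I expect to need the most care, since it uses that $F$ is a face. Suppose $x\in F_\infty\setminus F$. Then $x$ lies in the common affine hull, and $p$ is a relative interior point of $F$ in that $n$-dimensional affine space. Consider the segment $[x,y]$, where $y$ is a point of $F$ slightly beyond $p$ on the ray from $x$ through $p$; such a $y$ exists because $p$ is relatively interior in $F$. By convexity of $\mathcal C$, $[x,y]\subset\mathcal C$. This segment meets $F$ at the interior point $p$, so the face property of $F$ gives $[x,y]\subset F$, hence $x\in F$. This is a contradiction, so $F_\infty = F$.

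Since every convergent subsequence of $(F_i)$ has limit $F$, the whole sequence converges to $F$.

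The points to verify along the way are that ``interior point of a face'' means relative interior in its affine hull, and that extending segments past $p$ works in both directions as used above.
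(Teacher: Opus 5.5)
Your proposal is correct and follows the same route as the paper: pass to a Hausdorff-convergent subsequence, use \cref{limit of faces} and \cref{n-dim limit} to see the limit is a face of dimension at most $n$ containing $p$, deduce $F \subset F_\infty$ from $p$ being interior, and conclude $F_\infty = F$. The one difference is that you explicitly prove the reverse inclusion $F_\infty \subset F$, using equal affine hulls and the face property of $F$; the paper simply asserts ``Therefore $F=F'$'' from the containment and the dimension count.
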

\begin{proof}
We shall show that every subsequence of $(F_i)$ has a subsequence converging to $F$ in the Hausdorff topology, which evidently implies our proposition.
Passing to a subsequence, we can assume that $(F_i)$ converges in the Hausdorff topology, and by \cref{limit of faces}, the limit is a face, which we denote by $F'$.
By \cref{n-dim limit}, the limit $F'$ has dimension at most $n$.
On the other hand, since $p$ is the limit of the sequence $(p_i \in F_i)$, it must be contained in $F'$.
Since $p$ is an interior point, this implies that $F$ is contained in $F'$.
 By assumption, $F$ has dimension $n$. Therefore $F=F'$, which means that $F$ is a Hausdorff limit of a subsequence of $(F_i)$.
This completes the proof.
\end{proof}

\subsection{Chain-recurrent geodesic laminations}
We equip the surface $S$ with a a fixed hyperbolic metric. 
A \emph{geodesic lamination} on $S$ is a closed subset of $S$ which is a union of disjoint simple geodesics.
A geodesic lamination is said to be \emph{maximal} if it is not a proper subset of another geodesic lamination.
This is equivalent to saying that its complementary components in the surface are all ideal triangles.

Let $\lambda$   be a geodesic laminations on $S$, and $\ell$ a bi-infinite geodesic in $S$ which is disjoint from $\lambda$. We  say that $\ell$ (or  a half-leaf of  $\ell$)  \emph{spirals along} $\lambda$ if in the universal cover $\mathbb{H}^2$ of $S$ there is a lift of $\ell$ (or of the corresponding half-leaf) and a lift of a half-leaf of $\lambda$ which converge to the same point at infinity in $\mathbb{H}^2$. 
Since $\ell$ has two ends, we say that $\ell$ spirals along $\lambda$ in an end $e$ if a half leaf containing $e$ as its end spirals along $\lambda$.

A minimal component of a geodesic lamination is a sublamination which does not admit any proper sublamination.
For a geodesic lamination $\lambda$, we denote by $\lambda^c$ the union of its minimal components.
The complement $\lambda \setminus \lambda^c$ consists of a union of isolated leaves each of whose two ends spirals along some minimal component.  

The notion of chain-recurrent geodesic lamination was introduced by Thurston in \cite{ThM}.
It plays an important role in the development of Thurston's metric, in particular for what concerns geodesics and tangent vectors.
Since we do not need to use Thurston's original definition here, we adopt a necessary and sufficient condition as the definition of chain-recurrence.

\begin{definition}
 A geodesic lamination is said to be {\em chain-recurrent} if it is a Hausdorff limit of a sequence of geodesic laminations consisting of disjoint unions of simple closed geodesics.
\end{definition}
From this definition, we see that the set of chain-recurrent geodesic laminations is closed with respect to the Hausdorff topology.
Let $\chr(S)$ denote the space of chain-recurrent geodesic laminations equipped with the Hausdorff topology. We shall use the theory of train tracks originating in Thurston's notes \cite{GT3} and used in his paper \cite{ThM} for the setting of non-necessarily measured geodesic laminations.
A train track is said to be bi-recurrent if it is recurrent, \ie there is a weight system which is positive on every branch, and transversely recurrent, \ie there is a tangential measure, which we shall explain in \cref{tangential}, which takes a positive value on every branch.
Every chain-recurrent geodesic lamination is carried by a bi-recurrent train track.
(This fact is mentioned in \cite{ThM}. See \cref{bi-recurrent} for the proof.)
In this paper, we always assume that train tracks are trivalent, \ie at every switch, there are only three branches coming there, one from one direction and the other two from the other directions.

%
%

\subsection{Stretch vectors}

For every maximal chain recurrent geodesic lamination $\lambda$ on $S$ and for every $x \in \teich(S)$, there is a stretch ray $\alpha_\lambda\colon [0,\infty) \to \teich(S)$, defined in \cite{ThM}. 
Such a ray is a geodesic ray starting from $x$ (see \cite[\S4]{ThM}).

\begin{definition} Given a stretch ray $\alpha_\lambda\colon [0,\infty) \to \teich(S)$ starting at $x\in\teich(S)$, its  differential $\displaystyle\frac{d\alpha(t)}{dt}$ at $t=0$ is called the stretch vector along $\lambda$ at $x$, and is denoted by $\v_\lambda(x)$.
\end{definition}
Since $\alpha_\lambda$ is a geodesic ray starting at $x\in \teich(S)$ with respect to Thurston's asymmetric metric, $\v_\lambda(x)$ is a unit tangent vector with respect to the norm $\Vert \cdot \Vert_{\th}$ defining the Finsler structure of this metric. We denote by $\mathcal S_x$
the unit tangent sphere with respect to $\Vert \cdot \Vert_\th$ at $x \in \teich(S)$. 

The following lemma is a consequence of the continuity of ratio-maximising laminations  \cite[Theorem 8.4]{ThM} applied to the case where all the $\mu(g_i,h_i)$ (using the notation of that theorem) are maximal, which holds in the case where these laminations are associated with stretch lines.
\begin{lemma}
\label{continuous stretch}
Let $x$ be a point in $\mathcal{T}$ and, for $i=1,2,\ldots$, let $(\v_{\lambda_i}(x))$ be a sequence of stretch vectors in $\mathcal S_x$.
Then, the sequence $(\v_{\lambda_i}(x))$ converges to some vector $\v_\lambda(x)$ if and only if $(\lambda_i)$ converges to $\lambda$ in the Hausdorff topology.
\end{lemma}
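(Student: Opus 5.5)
We prove the two implications separately, using Thurston's continuity of ratio-maximising laminations \cite[Theorem 8.4]{ThM} for one direction and a compactness argument together with injectivity of $\lambda\mapsto\v_\lambda(x)$ for the other. Throughout, $\lambda_i$ and $\lambda$ are maximal chain-recurrent laminations. Recall the relevant facts from \cite{ThM}. For $y$ near $x$, write $\mu(x,y)$ for the maximally stretched chain-recurrent lamination. Along the stretch ray $\alpha_\lambda$, the lamination maximally stretched between $x$ and $\alpha_\lambda(t)$ contains $\lambda$. Since $\lambda$ is maximal chain-recurrent, it is in fact equal to $\lambda$, because no strictly larger chain-recurrent lamination exists.

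\emph{($\Leftarrow$)} Assume $\lambda_i\to\lambda$ in the Hausdorff topology. The stretch ray $\alpha_\lambda$ depends continuously on $\lambda$ and $x$, through the shear/cataclysm description of stretch maps on the complementary ideal triangles. One must check that its derivative at $t=0$ does as well; this holds because the cataclysm coordinates are real-analytic in the shear parameters and continuous in $\lambda$ in the Hausdorff topology, as in \cite[\S9]{ThM}. Hence $\v_{\lambda_i}(x)\to\v_\lambda(x)$.

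\emph{($\Rightarrow$)} Assume $\v_{\lambda_i}(x)\to\v_\lambda(x)$. The space of geodesic laminations is compact in the Hausdorff topology and $\chr(S)$ is closed, so it suffices to show that every Hausdorff-convergent subsequence of $(\lambda_i)$ has limit $\lambda$.

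\begin{enumerate}
\item Pass to a subsequence with $\lambda_i\to\lambda'$. Maximality is preserved under Hausdorff limits, since each complementary region of the limit is a limit of ideal triangles, so $\lambda'$ is maximal chain-recurrent.
\item By the ($\Leftarrow$) direction, $\v_{\lambda_i}(x)\to\v_{\lambda'}(x)$. Hence $\v_{\lambda'}(x)=\v_{\lambda}(x)$.
\item It remains to show that distinct maximal chain-recurrent laminations give distinct stretch vectors at $x$. The vector $\v_\lambda(x)$ infinitesimally stretches exactly the lamination $\lambda$ maximally, with derivative of length ratio $1$ on its leaves. This is Theorem 8.4 of \cite{ThM} applied infinitesimally: take $\mu(x,\alpha_\lambda(t))=\lambda$ and pass to the limit $t\to 0$. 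Hence $\v_\lambda(x)=\v_{\lambda'}(x)$ forces $\lambda=\lambda'$.
\end{enumerate}

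\emph{Main obstacle.} The main difficulty is making the infinitesimal version of Theorem 8.4 rigorous. One must show that the lamination maximally stretched by the tangent vector $\v_\lambda(x)$ equals the Hausdorff limit of $\mu(x,\alpha_\lambda(t))$ as $t\to0$, and that this limit is $\lambda$ itself rather than something larger. I would handle this using upper semicontinuity of maximally stretched laminations. On one side, any lamination infinitesimally maximally stretched by $\v_\lambda(x)$ is a limit of laminations nearly maximally stretched by $\alpha_\lambda(t)$. On the other side, maximality of $\lambda$ rules out any strictly larger chain-recurrent lamination.
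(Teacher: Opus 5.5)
\textbf{There is a gap, and it sits in the $(\Leftarrow)$ direction.} Your reduction of $(\Rightarrow)$ to $(\Leftarrow)$ plus injectivity of $\lambda\mapsto\v_\lambda(x)$ is fine on its own terms. Hausdorff limits of maximal chain-recurrent laminations are indeed maximal chain-recurrent. Injectivity can also be read off from \cref{face}, since $F_\lambda=\{\v_\lambda(x)\}$ when $\lambda$ is maximal.

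The problem is that the whole proof then rests on $(\Leftarrow)$, which you do not prove. You claim that the derivative of $\alpha_\lambda$ at $t=0$ varies continuously with $\lambda$ because cataclysm coordinates are real-analytic in the shear parameters and continuous in $\lambda$. That inference is invalid. Regularity for each fixed $\lambda$, together with $C^0$ convergence as $\lambda_i\to\lambda$, gives no control on derivatives: the functions $\varepsilon\sin(t/\varepsilon)$ converge uniformly to $0$, yet their derivatives at $t=0$ all equal $1$. You would need derivative estimates that are uniform in $\lambda$. You would also need a common chart in which to compare $d\cat_{\lambda_i}$ with $d\cat_\lambda$, because the domains $\MF_{\lambda_i}$ and the horocyclic foliations at which you differentiate both move with $i$. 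Nothing in the proposal supplies this, and it is exactly the content of the lemma.

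\textbf{How the paper does it.} The paper avoids derivatives altogether. It obtains the lemma from Thurston's semicontinuity of ratio-maximising laminations \cite[Theorem 8.4]{ThM}, applied in the situation where all the laminations involved are maximal, as they are for stretch lines. Semicontinuity says that a Hausdorff limit of maximally stretched laminations is contained in the lamination maximally stretched at the limit. A maximal chain-recurrent lamination cannot be properly contained in a chain-recurrent one, so containment becomes equality and semicontinuity becomes continuity.
\begin{itemize}
\item For $(\Leftarrow)$: by compactness of $\mathcal S_x$, take a limit point $v$ of $(\v_{\lambda_i}(x))$. Then $v$ maximally stretches $\lim\lambda_i=\lambda$, so $v\in F_\lambda=\{\v_\lambda(x)\}$.
\item For $(\Rightarrow)$: any subsequential Hausdorff limit $\lambda'$ of $(\lambda_i)$ is maximal and is contained in the lamination maximally stretched by $\v_\lambda(x)$, which is $\lambda$. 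Hence $\lambda'=\lambda$.
\end{itemize}

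\textbf{Fix.} You already appeal to exactly this infinitesimal semicontinuity in your step~3 and in your ``main obstacle'' paragraph. Using it for $(\Leftarrow)$ as well closes the gap and makes the cataclysm-derivative argument unnecessary. The passage from pairs of points to tangent vectors, which you rightly flag, still has to be justified.
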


Properties of a face, such as its dimension, can be deduced from the dynamical and combinatorial properties of the lamination $\lambda$.
We shall see this more precisely later in this paper.

The following two theorems  were proved in \cite{BOP}.

\sloppy
\begin{theorem}[\cite{BOP}, Theorem 4.1]
\label{face}
For any face $F$ of $\mathcal S_x$, there is a unique chain-recurrent geodesic lamination $\lambda$ such that $F$ is expressed as
\begin{equation*}
\begin{split}
&F=F_\lambda:=\{\alpha_1 \v_{\lambda_1}(x)+ \dots +\alpha_k \v_{\lambda_k}(x) \mid  \lambda_1, \dots , \lambda_k \text{ maximal chain-recurrent }  \\
&\text{geodesic laminations containing } \lambda, \  \alpha_1, \dots , \alpha_k > 0,  \alpha_1+ \dots + \alpha_k=1\}.
\end{split}
\end{equation*}
Furthermore any set of the form $F_\lambda$ for some chain-recurrent geodesic lamination $\lambda$ is a face of $\mathcal S_x$.
\end{theorem}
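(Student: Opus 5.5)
We need to prove Theorem 4.1 of BOP: every face $F$ of $\mathcal S_x$ equals $F_\lambda$ for a unique chain-recurrent $\lambda$, and every $F_\lambda$ is a face. I will work through the dual description of the norm. By Thurston, $\Vert v\Vert_\th=\sup_{\mu\in\pml} d\log\ell_\mu(v)$, where $\ell_\mu$ is the hyperbolic length function. For a tangent vector $v$, let $\Lambda(v)$ be the union of the supports of the laminations $\mu$ at which this supremum is attained. For a unit vector, extended to the maximal ratio-maximising chain-recurrent lamination as in \cite{ThM}, $\Lambda(v)$ is chain-recurrent; this is Thurston's "maximally stretched lamination" $\mu(v)$.

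\textbf{$F_\lambda$ is a face.} Fix a chain-recurrent $\lambda$ and set
\[
G_\lambda=\{v\in\mathcal S_x : \lambda\subset\Lambda(v)\},
\]
the set of unit vectors that maximally stretch every leaf of $\lambda$.
\begin{enumerate}[(i)]
\item $G_\lambda$ is a face. The functional $v\mapsto d\log\ell_\mu(v)$ is linear for each $\mu$ supported in $\lambda$, and it is bounded by $\Vert v\Vert_\th$. So if a segment in the unit ball meets $G_\lambda$ at an interior point, linearity together with the bound $\le 1$ forces equality on the whole segment. Hence $G_\lambda$ is an intersection of faces cut out by supporting functionals, and therefore a face.
\item $G_\lambda=F_\lambda$.
\begin{itemize}
\item $F_\lambda\subset G_\lambda$: a stretch vector $\v_{\lambda_i}(x)$ with $\lambda_i\supset\lambda$ maximally stretches $\lambda_i$, so it lies in $G_\lambda$. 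Since $G_\lambda$ is convex, it contains their convex combinations.
\item $G_\lambda\subset F_\lambda$: this is the main obstacle. One must show that every unit vector $v$ maximally stretching $\lambda$ is a convex combination of stretch vectors along maximal completions of $\lambda$. I would use Krein--Milman in the compact convex set $G_\lambda$ and show that its extreme points are stretch vectors. Suppose $v$ is extreme in $G_\lambda$ with $\Lambda(v)=\nu\supset\lambda$. Use Thurston's cataclysm (shear) coordinates relative to a maximal completion of $\nu$. In these coordinates, vectors maximally stretching $\nu$ form an affine set whose extreme points are the stretch vectors along maximal chain-recurrent completions of $\nu$. Then $v$ extreme forces $v=\v_{\lambda'}(x)$ for some maximal $\lambda'\supset\nu\supset\lambda$.
\end{itemize}
I will also invoke \cref{continuous stretch} to show that the set of such stretch vectors is closed. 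This makes the convex hull closed, so $F_\lambda$ is closed and the Krein--Milman argument genuinely yields $G_\lambda\subset F_\lambda$.
\end{enumerate}

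\textbf{Every face is some $F_\lambda$.} Given a face $F$, pick $v$ in the relative interior of $F$ and set $\lambda=\Lambda(v)$. If $w\in F$, then $v$ is interior to a segment from $w$ to some other point of $F$. Every $\mu\subset\lambda$ satisfies $d\log\ell_\mu=1$ at $v$, so by linearity and the bound it equals $1$ along the whole segment; hence $w\in G_\lambda$ and $F\subset G_\lambda$. Conversely, $G_\lambda$ is a face containing the relative interior point $v$ of $F$. Since the set of vectors maximally stretching exactly $\lambda$ is relatively open in $G_\lambda$, a segment argument gives $G_\lambda\subset F$. This last step uses that $\Lambda$ is constant on relative interiors, which in turn follows from the upper semicontinuity in \cite[Theorem 8.4]{ThM}. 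Therefore $F=G_\lambda=F_\lambda$.

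\textbf{Uniqueness.} Suppose $F_\lambda=F_{\lambda'}$. Then $\lambda$ equals the set of leaves maximally stretched by all vectors in the face, namely the intersection of the maximal laminations $\lambda_i$ that contain it. Every chain-recurrent lamination is the intersection of its maximal chain-recurrent completions; this follows from carrying it by a bi-recurrent train track and completing it in distinct ways. So $\lambda=\lambda'$.
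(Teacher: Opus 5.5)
The paper does not prove this statement; it quotes it from \cite{BOP}. Your argument therefore has to stand on its own, and as written it has a genuine gap exactly where the theorem has content: isolated leaves.

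\textbf{Isolated leaves.} Your only mechanism for ``maximally stretched'' is the family of linear functionals $d\log\ell_\mu$, $[\mu]\in\pml$. These see only the measured part $\lambda^c$ of a chain-recurrent lamination, because isolated leaves carry no transverse measure. So the linearity arguments in (i), in the convexity of $G_\lambda$, and in ``every face is some $F_\lambda$'' can only produce intersections of exposed faces, i.e.\ faces $F_\nu$ with $\nu=\nu^c$ (cf.\ \cref{codimension of face}). Most faces are not of this type.

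Concretely, let $\lambda$ be a maximal chain-recurrent lamination whose stump $\lambda^c$ is a single simple closed geodesic $c$. Such $\lambda$ exist, e.g.\ as Hausdorff limits of $T_c^n(\gamma)$, with $T_c$ the Dehn twist about $c$ and $\gamma$ a suitable multicurve crossing $c$.
\begin{itemize}
\item By \cref{extreme}, $F=\{\v_\lambda(x)\}$ is a face.
\item The measured laminations maximally stretched by $\v_\lambda(x)$ are the multiples of $c$ (\cite[Lemma 2.7]{BOP}). So with your definition $\Lambda(\v_\lambda(x))=c$, and $G_{\Lambda(v)}=F_c$, which has dimension $6g-7$.
\end{itemize}
Your conclusion $F=G_{\Lambda(v)}$ is therefore false. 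The failing step is the segment argument, which needs $v$ to lie in the relative interior of $G_{\Lambda(v)}$. Here $\v_\lambda(x)$ is an extreme point of $G_c$ even though $\Lambda(\v_\lambda(x))=c$; ``relatively open in $G_\lambda$'' does not mean ``relatively interior''. In fact $\v_\lambda(x)$ is a non-exposed extreme point: every supporting hyperplane at it is $\{d\log\ell_x([c])=1\}$, which cuts out $F_c$. So no argument built only from supporting functionals can isolate it.

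If instead you take $\Lambda(v)$ to be Thurston's chain-recurrent maximally stretched lamination, the isolated leaves are included. But then you need a real argument for two facts that linearity plus the bound $\le 1$ does not give:
\begin{itemize}
\item if $v$ is interior to a segment $[w_1,w_2]\subset\mathcal S_x$, then $w_1$ and $w_2$ also maximally stretch every isolated leaf of $\Lambda(v)$;
\item conversely, $G_\lambda$ is convex.
\end{itemize}
Upper semicontinuity \cite[Theorem 8.4]{ThM} does not give constancy of $\Lambda$ on relative interiors either; that constancy \emph{is} the face property just described. Something that genuinely registers isolated leaves is needed, such as tangential measures on a train track adapted to $\lambda$, as in the proof of \cref{dimension}.

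\textbf{The inclusion $G_\lambda\subset F_\lambda$.} You rightly flag this as the main obstacle. Reducing it to extreme points by Krein--Milman is fine. But your claim is that, in cataclysm coordinates, the vectors maximally stretching $\nu$ form a set whose extreme points are the stretch vectors along maximal completions of $\nu$. That is the statement itself applied to $\nu\supset\lambda$ (equivalently: every extreme point of the unit ball is a stretch vector), not a step towards it. The cataclysm/tangential-measure computation only shows that this set is locally cut out by linear conditions of the right codimension near its interior points. It says nothing about its extreme points. (Also, an affine set has no extreme points; you mean its intersection with the ball.)

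\textbf{Uniqueness.} This part is essentially fine, but route it through extreme points rather than through ``leaves maximally stretched''. By \cref{extreme} and the injectivity of $\lambda\mapsto\v_\lambda(x)$ given by \cref{continuous stretch}, the extreme points of $F_\lambda$ are exactly the stretch vectors along the maximal completions of $\lambda$. Hence $F_\lambda=F_{\lambda'}$ forces $\lambda$ and $\lambda'$ to have the same maximal completions. You then still need to prove that a chain-recurrent lamination is the intersection of its maximal chain-recurrent completions.
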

\begin{theorem}[\cite{BOP}, Theorem 4.3]
\label{extreme}
Any stretch vector at $x \in \teich(S)$ is an extreme point of $\mathcal S_x$.
\end{theorem}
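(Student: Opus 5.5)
The plan is to deduce the statement directly from \cref{face}, by showing that the face attached to a maximal lamination is a single point.

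Let $\v_\lambda(x)$ be a stretch vector at $x$, where $\lambda$ is a maximal chain-recurrent geodesic lamination. \cref{face} applies to every chain-recurrent lamination, so in particular to $\lambda$. It tells us that
\[
F_\lambda=\{\alpha_1 \v_{\lambda_1}(x)+ \dots +\alpha_k \v_{\lambda_k}(x)\}
\]
is a face of $\mathcal S_x$. Here each $\lambda_j$ is a maximal chain-recurrent lamination containing $\lambda$, and the $\alpha_j$ are positive with sum $1$.

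Next I identify this set. Since $\lambda$ is maximal, it is not a proper subset of any geodesic lamination. Hence every $\lambda_j \supseteq \lambda$ must equal $\lambda$. Every convex combination in the definition of $F_\lambda$ is therefore $\sum_j \alpha_j \v_\lambda(x)=\v_\lambda(x)$, so $F_\lambda=\{\v_\lambda(x)\}$. Thus $\v_\lambda(x)$ is a $0$-dimensional face of $\mathcal S_x$.

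It remains to check that a one-point face is an extreme point. Suppose some segment in $\partial\mathcal C$ contained $\v_\lambda(x)$ in its interior. This segment lies in the unit ball $\mathcal C$ and meets the face $\{\v_\lambda(x)\}$ at an interior point. By the definition of a face, the whole segment would then be contained in $\{\v_\lambda(x)\}$, which is absurd for a nondegenerate segment. Hence $\v_\lambda(x)$ is extreme.

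The only step needing care is the last sentence of \cref{face}: that $F_\lambda$ really is a face for every chain-recurrent $\lambda$, including maximal ones. In particular, one must be sure the convex-combination description does not implicitly enlarge the set. Since we are allowed to assume \cref{face}, this is not a true obstacle here.

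If one wanted to avoid using that part of \cref{face}, I would argue as follows. By the first part of \cref{face}, the minimal face containing $\v_\lambda(x)$ is $F_\mu$ for a unique chain-recurrent $\mu$. Then $\v_\lambda(x)$ is a convex combination of stretch vectors $\v_{\lambda_j}(x)$ with each $\lambda_j\supseteq\mu$. The intended argument is that this forces all the $\lambda_j$ to equal $\lambda$, via the uniqueness of the maximally stretched lamination, which is the harder point.
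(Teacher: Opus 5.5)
The paper gives no proof of this statement. It is quoted from \cite{BOP} together with \cref{face}, so there is no internal argument to compare yours with.

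Your deduction is correct as a corollary of the last clause of \cref{face}. For maximal $\lambda$, the only maximal chain-recurrent lamination containing $\lambda$ is $\lambda$ itself, so $F_\lambda=\{\v_\lambda(x)\}$. Since this singleton is a face, no nondegenerate segment of the unit ball has $\v_\lambda(x)$ in its interior, which is extremality in the paper's sense. The paper uses the same correspondence between $0$-dimensional faces and stretch vectors in the proof of \cref{combinatorially invariant}.

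What this route buys is a two-line formal consequence of the face classification. What it costs is independence. It is a genuine proof only if \cite{BOP} establishes the clause ``every $F_\lambda$ is a face'' without using extremality of stretch vectors. Within this paper that clause is an available black box, so your argument stands here.

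Your last paragraph, by contrast, is not a proof, and you should either drop it or label it a heuristic. The step ``this forces all the $\lambda_j$ to equal $\lambda$'' is the entire content of the theorem. It is also not automatic. Linearity only gives you that each $\v_{\lambda_j}(x)$ maximally stretches the measured part $\lambda^c$, hence that $\lambda_j\supseteq\lambda^c$. It does not give $\lambda_j=\lambda$, because maximal stretching of the non-measured leaves of $\lambda$ is not expressed by the linear functionals $d\log\ell_\mu$ on $T_x\teich(S)$. A direct proof would need an infinitesimal rigidity statement: a unit vector maximally stretching a complete chain-recurrent lamination $\lambda$ must equal $\v_\lambda(x)$. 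It would also need that this maximal stretching passes to both endpoints of any segment through $\v_\lambda(x)$. You supply neither.
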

\fussy

The following is a consequence of \cref{continuous stretch,face}.
\begin{lemma}
\label{limit face}
For each $i \in \naturals$, let $v_i$ be a tangent vector at a point $x\in S$,  lying in  some face $F_{\lambda_i}$ for some chain-recurrent geodesic lamination $\lambda_i$.
Suppose that the sequence $(v_i)$ converges to some $w \in \mathcal S_x$.
Then for any Hausdorff limit $\mu$ of a subsequence of  $(\lambda_i)$, the vector $w$ is contained in the face $F_\mu$.
\end{lemma}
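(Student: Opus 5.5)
Throughout, write $\operatorname{Hull}$ for the convex hull. The plan is to reduce everything to stretch vectors and then pass to limits using \cref{continuous stretch}.

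\emph{Step 1 (bounded form of $v_i$).} By \cref{face}, each $v_i$ is a convex combination
$$v_i=\sum_{j}\alpha^i_j\,\v_{\lambda^i_j}(x),$$
where each $\lambda^i_j$ is a maximal chain-recurrent lamination containing $\lambda_i$. The face $F_{\lambda_i}$ lies in the finite-dimensional space $T_x\teich(S)$, of dimension $N=6g-6$. By Carathéodory's theorem we may therefore take at most $N+1$ terms. Padding with zero coefficients and repeated laminations, we may assume the number of terms is exactly $N+1$ for every $i$.

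\emph{Step 2 (compactness).} Pass to the subsequence along which $\lambda_i\to\mu$ in the Hausdorff topology. Refine it further so that three things converge:
\begin{itemize}
\item each coefficient, $\alpha^i_j\to\alpha_j\ge 0$, with $\sum_j\alpha_j=1$;
\item each $\lambda^i_j$ converges in the Hausdorff topology to some $\nu_j$ (the space of geodesic laminations is compact);
\item $\chr(S)$ is closed, so each $\nu_j$ is chain-recurrent.
\end{itemize}

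\emph{Step 3 (properties of the limits $\nu_j$).}
\begin{itemize}
\item Since $\lambda_i\subset\lambda^i_j$ for all $i$, Hausdorff limits preserve inclusion, so $\mu\subset\nu_j$.
\item Each $\nu_j$ is maximal. A Hausdorff limit of maximal laminations is maximal: the complement of a maximal lamination consists of ideal triangles of fixed total area, and a limit lamination contains the limit leaves. Alternatively, one can use directly that limits of stretch vectors are stretch vectors.
\item The stretch vectors converge, $\v_{\lambda^i_j}(x)\to\v_{\nu_j}(x)$. This uses the ``if'' direction of \cref{continuous stretch}. One must check that the lemma applies when the limit exists as a lamination; if needed, argue first that the sequence $(\v_{\lambda^i_j}(x))$, which lies in the compact sphere $\mathcal S_x$, has a convergent subsequence. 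Its limit is a stretch vector $\v_{\nu'}(x)$ by Thurston's continuity theorem, and $\nu'=\nu_j$ by the ``only if'' direction.
\end{itemize}

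\emph{Step 4 (conclusion).} Passing to the limit gives $w=\sum_j\alpha_j\v_{\nu_j}(x)$. Discard the terms with $\alpha_j=0$. What remains is a convex combination with positive coefficients of stretch vectors along maximal chain-recurrent laminations containing $\mu$. By \cref{face}, this means exactly that $w\in F_\mu$.

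\emph{Main obstacle.} The delicate point is Step 3, namely that the limits $\nu_j$ are maximal and that their stretch vectors are the limits. I would handle this through \cref{continuous stretch} applied within $\mathcal S_x$, together with the fact that limits of stretch vectors are again stretch vectors. If that fact is not available directly, I would prove maximality from the area argument.
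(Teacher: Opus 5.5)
Your proposal is correct and is essentially the paper's own argument: use \cref{face} to write $v_i$ as a convex combination of a bounded number of stretch vectors along maximal chain-recurrent laminations containing $\lambda_i$, pass to a subsequence along which the coefficients and these laminations converge, note that the limits are maximal chain-recurrent by closedness in the Hausdorff topology, apply \cref{continuous stretch}, and observe that the limit laminations contain $\mu$. Your appeal to Carath\'eodory's theorem and your explicit discarding of vanishing coefficients only make precise what the paper does implicitly; for maximality of the limits the paper simply cites the standard closedness fact, so your area heuristic is not needed.
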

\begin{proof}
Let $\mu$ be the Hausdorff limit of a subsequence of $(\lambda_i)$. By passing to a subsequence, we may assume that $\mu$ is the  Hausdorff limit of $(\lambda_i)$.
Since $v_i$ lies on $F_{\lambda_i}$, by \cref{face}, there exist maximal chain-recurrent geodesic lamination $\lambda^1_i, \dots , \lambda^k_i$ all of which contain $\lambda_i$ and positive numbers $\alpha^1_i, \dots , \alpha^k_i$ such that $v_i=\alpha^1_i \v_{\lambda^1_i}(x)+ \dots + \alpha^k_i\v_{\lambda^k_i}(x)$ with $\alpha^1_i+ \dots +\alpha^k_i=1$.
Since the dimension of a face is bounded by $6g-7$, we may assume that $k \leq 6g-7$.
Passing to a subsequence, we can further assume that $k$ is independent of $i$.

Since the set of maximal chain-recurrent geodesic laminations is closed in the Hausdorff topology, passing to a subsequence again, for each $j=1, \dots , k$, the sequence $(\lambda^j_i)$ converges to a maximal chain-recurrent geodesic lamination $\mu^j$  and $(\alpha^j_i)$ converges to $\beta^j \in [0,1]$ as $i \to \infty$.
By \cref{continuous stretch}, $(\v_{\lambda^j_i})$ converges to $\v_{\mu^j}$ as $i \to \infty$.
Since all of the $\lambda^j_i\ (j=1, \dots , k)$ contain $\lambda_i$, their limits $\mu^1, \dots , \mu^k$ all contain the Hausdorff limit $\mu$ of $(\lambda_i)$.
Therefore, the limit $w$ of $(v_i)$ lies in $F_\mu$.
\end{proof}

We can refine this result to the following proposition.

\begin{proposition}
\label{face Hausdorff}
A sequence of chain recurrent geodesic laminations $(\lambda_i)$ converges to a chain recurrent geodesic lamination $\mu$ in the Hausdorff topology of $S$ if and only if the sequence of faces $(F_{\lambda_i})$ converges to $F_\mu$ in the Hausdorff topology of the unit sphere $\mathcal{S}_s$.
\end{proposition}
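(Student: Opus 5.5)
Both implications will be reduced to compactness together with uniqueness of limits, so in each direction it suffices to show that every convergent subsequence has the expected limit. Two facts are used throughout. The space $\chr(S)$ is compact, being a closed subset of the compact space of geodesic laminations with the Hausdorff topology. The set of faces of $\mathcal S_x$, viewed as closed subsets of a compact metric space, is also compact, and its Hausdorff limits are again faces by \cref{limit of faces}. By the uniqueness part of \cref{face}, the assignment $\lambda\mapsto F_\lambda$ is injective.

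\emph{Forward direction.} Assume $\lambda_i\to\mu$, and pass to a subsequence along which $F_{\lambda_i}\to F'$ in the Hausdorff topology. By \cref{limit of faces}, $F'$ is a face, so $F'=F_\nu$ for a unique chain-recurrent $\nu$.

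To get $F'\subset F_\mu$: every point $w\in F'$ is a limit of points $v_i\in F_{\lambda_i}$. Since $\mu$ is the Hausdorff limit of $(\lambda_i)$, \cref{limit face} gives $w\in F_\mu$.

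To get $F_\mu\subset F'$: take a maximal chain-recurrent $\mu'\supset\mu$. I want to approximate it by maximal chain-recurrent $\lambda_i'\supset\lambda_i$ with $\lambda_i'\to\mu'$. Then $\v_{\lambda_i'}(x)\in F_{\lambda_i}$ converges to $\v_{\mu'}(x)$ by \cref{continuous stretch}, so $\v_{\mu'}(x)\in F'$. Since $F'$ is convex, it then contains all of $F_\mu$.

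\emph{Main obstacle.} The approximation step above, the lower semicontinuity, is where the real work lies. I would try two routes. The first is to pass to geodesic completions. If $\lambda_i'\supset\lambda_i$ are arbitrary maximal chain-recurrent completions, any limit of them contains $\mu$, but it need not equal the prescribed $\mu'$. The second route is to use train tracks. Take a bi-recurrent train track $\tau$ carrying $\mu'$ snugly. For large $i$, $\lambda_i$ is carried by $\tau$, because $\lambda_i$ is Hausdorff-close to $\mu\subset\mu'$. I would then try to complete $\lambda_i$ inside $\tau$, adding isolated leaves that follow the branches of $\tau$ not used by $\lambda_i$, in the same spiralling pattern as $\mu'$. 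The resulting completions converge to $\mu'$.

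If this step resists, I would fall back on a dimension argument: show $\dim F'\ge\dim F_\mu$ and combine it with $F'\subset F_\mu$. That needs a dimension formula, which is not yet available at this point in the paper, so the train-track completion is the plan I would pursue first.

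\emph{Converse direction.} Assume $F_{\lambda_i}\to F_\mu$, and let $\nu$ be the limit of any convergent subsequence of $(\lambda_i)$, which exists by compactness of $\chr(S)$. The forward direction applied to this subsequence gives $F_{\lambda_i}\to F_\nu$. Hausdorff limits are unique, so $F_\nu=F_\mu$, and injectivity from \cref{face} gives $\nu=\mu$. Every subsequential limit of $(\lambda_i)$ is therefore $\mu$, so $\lambda_i\to\mu$.
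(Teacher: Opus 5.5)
Your proposal is correct. Your forward direction follows the paper, and your converse takes a genuinely different and shorter route.

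\textbf{Forward direction.} This is the paper's argument. The inclusion of the limit in $F_\mu$ comes from \cref{limit face}. The reverse inclusion comes from extending $\lambda_i$, inside a train track carrying a maximal $\mu'\supset\mu$, to maximal chain-recurrent $\lambda_i'\to\mu'$, and then applying \cref{continuous stretch}.

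The paper approximates a whole convex combination $\sum_j\alpha_j\mathbf v_{\mu_j}(x)$ at once, using tracks $\tau_j$ with a common sub-track carrying $\mu$. You approximate one stretch vector at a time and use that a Hausdorff limit of convex sets is convex. The two are equivalent.

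The paper also only asserts the extension step, so you have correctly located where the content lies. When you write it out, two points need care:
\begin{itemize}
\item $\mu'\setminus\mu$ may contain whole minimal components, not only isolated leaves.
\item Being carried by one fixed track does not by itself force the limit to be $\mu'$, so the completions must be chosen to follow $\mu'$.
\end{itemize}
Your fallback via a dimension count would in fact be circular, since the proof of \cref{dimension} cites this proposition.

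\textbf{Converse.} This is where you differ from the paper. The paper argues directly in two steps:
\begin{itemize}
\item It writes an interior point of $F_\mu$, and nearby points of $F_{\lambda_i}$, as convex combinations of stretch vectors. Tracking limits of the coefficients and of the common sublaminations, it shows that the Hausdorff limit of $(\lambda_i)$ is contained in $\mu$.
\item It excludes a proper inclusion by running the extension step a second time. This produces stretch vectors in $F_{\lambda_i}$ that converge to a point outside $F_\mu$.
\end{itemize}
You instead obtain the converse formally from four ingredients: the forward direction, compactness of $\chr(S)$, uniqueness of Hausdorff limits, and the injectivity of $\lambda\mapsto F_\lambda$ given by the uniqueness clause of \cref{face}.

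Your route is shorter and avoids the coefficient bookkeeping. The paper's route in effect re-derives in place the special case of injectivity it needs: a proper sublamination $\lambda_\infty\subsetneq\mu$ has a maximal completion not containing $\mu$. But it still uses the extension lemma twice, so it gains nothing on the essential difficulty.
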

\begin{proof}
We note that the Hausdorff convergence of $(F_{\lambda_i})$ to $F_\mu$ is equivalent to the conditions  that any sequence $(v_{i_j})$ taken from $F_{\lambda_{i_j}}$ for a subsequence $(F_{i_j})$ of $(F_j)$, has a  subsequence converging to a point on $F_\mu$, and that every point of $F_\mu$ is a limit of a sequence $(v_i\mid v_i \in F_{\lambda_i})$.

We first show that the \lq only  if' part.
Suppose that the sequence $(\lambda_i)$ converges to $\mu$ in the Hausdorff topology of the surface.
Then by \cref{limit face}, any limit of a subsequence of $(v_i\in F_{\lambda_i})$ lies on $F_\mu$.

We shall next show that any point on $F_\mu$ is a limit of points on $F_{\lambda_i}$.
Let $w$ be a point on $F_\mu$.
By \cref{face}, there are maximal chain recurrent geodesic laminations $\mu_1, \dots, \mu_k$ containing $\mu$  and positive numbers $\alpha_1, \dots , \alpha_k$  with $\alpha_1+ \dots + \alpha_k=1$ such that $v_\mu=\alpha_1 \v_{\mu_1}(x)+ \dots + \alpha_k \v_{\mu_k}(x)$.
For each $\mu_j\, (j=1, \dots , k)$, we choose a maximal recurrent train track $\tau_j$ carrying $\mu_j$ with its sub-train track $\tau'$ carrying $\mu$, which is independent of $j$.
Since $(\lambda_i)$ converges to $\mu$, for sufficiently large $i$, the geodesic lamination $\lambda_i$ is carried by $\tau'$.
By adding leaves carried by $\tau_j$, we can extend $\lambda_i$ to a maximal chain recurrent geodesic lamination $\lambda_i(j)$ carried by $\tau_j$ such that the sequence $(\lambda_i(j))$  converges to $\mu_j$ in the Hausdorff topology as $i\to \infty$.
Then $\alpha_1 \v_{\lambda_i(1)}(x)+ \dots + \alpha_k\v_{\alpha_k \lambda_i(k)}(x)$ is a point lying on $F_{\lambda_i}$ which converges to $w$ as $i \to \infty$.
This completes the proof of the \lq only if' part.

We now turn to the \lq if' part.
Suppose that $(F_{\lambda_i})$ converges to $F_\mu$ in the Hausdorff topology.
We shall show that any subsequence of $(\lambda_i)$ has a subsequence converging to $\mu$ in the Hausdorff topology.
We use the same symbol $(\lambda_i)$ to denote an arbitrary subsequence of $(\lambda_i)$.
Take an interior point $w$ of $F_\mu$.
Then by \cref{face}, we have an expression $w =\alpha_1 \v_{\mu_1}(x) + \dots + \alpha_k \v_{\mu_k}(x)$.
Now the largest chain recurrent geodesic lamination shared by $\mu_1, \dots, \mu_k$ is $\mu$, for otherwise $w$ lies on a subface of $F_\mu$, and cannot be an interior point.
Since $(F_{\lambda_i})$ converges to $F_\mu$ in the Hausdorff topology, there is a sequence  $(w_i \in F_{\lambda_i})$ converging to $w$.
Then, by \cref{face}, $w_i$ is expressed as $w_i=\beta^i_1\v_{\nu_1(i)}(x)+ \dots +\beta^i_{k_i}\v_{\nu_{k_i}(i)}(x)$, where $\nu_1(i), \dots , \nu_{k_i}(i)$ are maximal chain-recurrent geodesic laminations sharing $\lambda_i$ as their common chain recurrent sub-geodesic lamination, and $\beta^i_1+ \dots \beta^i_{k_i}=1$.
We can choose such expressions so that  $k_i$ is less than or equal to the dimension of the face $F_{\lambda_i}$, which is bounded by $6g-7$.
Hence, passing to a subsequence, we can assume that $k_i$ is constant, which we denote by $k$.
Since $(w_i)$ converges to $w$, and passing to a subsequence, $(\nu^i_j)$ converges to a maximal chain-recurrent geodesic lamination $\gamma_j$.
By changing the order of $\nu_1(i), \dots , \nu_k(i)$, and passing to a subsequence, we can assume that there is $l \leq k$ such that $\lim_{i \to \infty}\beta^i_j$ exists and positive for $j=1, \dots , l$ whereas $\lim_{i \to \infty}\beta^i_j=0$ if $j >l$.
Let $\lambda'_i$ be the largest common chain-recurrent sub-geodesic lamination of $\nu_1^i, \dots \nu_l^i$, and set $\beta_j=\lim_{i \to \infty} \beta^i_j$.
By definition, $\lambda_i'$ contains $\lambda_i$, and we can assume that $(\lambda_i')$ converges in the Hausdorff topology, by passing to a subsequence.
Then, since $w_i$ converges to $\beta_1 \gamma_1+ \dots \beta_l \gamma_l$ with $\beta_1, \dots , \beta_l >0$, the largest common chain-recurrent sub-geodesic lamination of $\gamma_1, \dots , \gamma_l$ contains the Hausdorff limit of $\lambda_i'$.
Since the limit of $(w_i)$ is $w$, this implies that the Hausdorff limit of   $(\lambda_i')$ is contained in  $\mu$.
It follows that the Hausdorff limit of $(\lambda_i)$ is contained in $\mu$.

We shall next show the opposite inclusion.
Let $\lambda_\infty$ be the Hausdorff limit of $(\lambda_i)$, and suppose that $\lambda_\infty$ is properly contained in $\mu$.
Then, there is a maximal chain-recurrent geodesic lamination $\lambda'_\infty$ containing $\lambda_\infty$ and intersects $\mu$ transversely.
As in the proof of the \lq only if' part, we can extend $\lambda_i$ to a maximal chain-recurrent geodesic lamination $\lambda_i'$ which converges to $\lambda'_\infty$.
Since $\lambda_i'$ contains $\lambda_i$, the stretch vector $\v_{\lambda_i'}$ is contained in $F_{\lambda_i}$.
On the other hand, since $(\v_{\lambda_i'})$ converges to $\v_{\lambda_\infty'}$, which is not contained in $F_\mu$.
Thus we have  found a sequence of points on $F_{\lambda_i}$ converging to a point outside $F_\mu$, contradicting our assumption that $(F_{\lambda_i})$ converges to $F_\mu$ in the Hausdorff topology.
Combining this with what we  proved in the preceding paragraph, we have shown that any subsequence of $(\lambda_i)$ has a subsequence converging to $\mu$.
\end{proof}
%
\subsection{Cataclysms}
The notion of cataclysm was introduced by Thurston in \cite{ThM} as a  deformation of hyperbolic structures on a surface, which generalise both earthquakes and stretch maps. Underlying the cataclysm transformation, there is a notion of cataclysm coordinates for Teichm\"uller space, which we recall now.
Let $\lambda$ be a maximal geodesic lamination and let $\MF_\lambda$ be the subspace of measured foliation space $\MF(S)$ consisting the measured foliations transverse to $\lambda$.
For a maximal chain-recurrent geodesic lamination $\lambda$ on $S$ with a fixed hyperbolic metric, we define $F(\lambda)$ to be its horocyclic  measured foliation.
Then each component of $S \setminus \lambda$ is an ideal triangle.
For every measured foliation $F$ transverse to $\lambda$, we can put a hyperbolic metric on $S$ such that the horocyclic measured foliation is isotopic to $F$.
This defines the cataclysm map $\cat_\lambda \colon \MF_\lambda \to \teich(S)$ associated with $\lambda$, which gives a coordinate system of Teichm\"{u}ller space.

\section{The space of tangential measures and the complexity of chain-recurrent laminations}
\label{tangential}
In this section, we  introduce the notion of tangential measure and study the dimension of the space of tangential measures to define the complexity for a chain-recurrent geodesic lamination.
These  are necessary for one of our main theorems, \cref{dimension}.

We start with the definition of tangential measure on a train track, which was first defined by Thurston \cite[\S 9.7]{ThL}, and explained in detail in Penner-Harer \cite[Chap.\ 1.3]{PH}.

\begin{definition}
\label{tangential measure}
Let $\tau$ be a train track.
A {\em tangential measure} $w$ on $\tau$ is a collection $\{w(b)\}$ of non-negative numbers, one  on each branch $b$ of $\tau$, which satisfies the following.
\begin{enumerate}[(1)]
\item Let $U$ be a simply-connected component of $S \setminus \tau$.
Let $b_1, \dots , b_k$ be the branches  of $\tau$ lying on $\Fr U$, which may contain the same branch twice if it is adherent to $U$ on both sides.
Then for every $j=1, \dots, k$, we have $\displaystyle w(b_j) \leq \sum_{i \neq j} w(b_i)$.
\item Let $U$ be a component of $S \setminus \tau$ homeomorphic to an open annulus, consisting of a simple closed branch $c$ and other branches $b_1, \dots , b_k$ (which may contain the same branch twice in the same way as above.
Then $\displaystyle w(c)\leq \sum_{j=1}^k w(b_j)$.
\end{enumerate}
Following \cite{PH}, we say that a train track $\tau$ is {\em bi-recurrent} when it is recurrent and carries a tangential measure whose values are all positive.
\end{definition}

We introduce the following equivalence relation between tangential measures.
Let $s$ be a switch of a train track $\tau$ to which two branches $b_1, b_2$ come from one direction and another branch $b_3$ from the other direction.
Suppose that $w$ is a tangential measure which takes values $v_1, v_2, v_3$ on the three branches $b_1, b_2, b_3$ respectively.
We define  two kinds of elementary moves of $w$ at $s$ to be the change of $(v_1, v_2, v_3)$ to $(v_1+c, v_2+c, v_3-c)$ for a positive number $c$ less than $v_3$, and the change of $(v_1, v_2, v_3)$ to $(v_1-d, v_2-d, v_3+d)$ for a positive number $d$ less than $\min\{v_1, v_2\}$.
We define two tangential measures to be equivalent when one can be obtained from the other by a finite sequence of elementary moves.
The equivalence class of a tangential measure $w$ is denoted by $[w]$.

The space of positive tangential measures on $\tau$ modulo this equivalence relation is denoted by $M^*(\tau)$.
It is the quotient space of the space of tangential measures whose kernel is linearly generated by $2v_1+v_2+v_3$  at $s$, where $s$ ranges over all switches of $\tau$.
Therefore $M^*(\tau)$ is the positive part of a vector space, and we can define its dimension to be the dimension of this vector space. 
We note that except for the case when $M^*(\tau)$ is trivial,  we can construct a basis totally contained in $M^*(\tau)$ from any basis of the vector space by adding a scalar multiple of a positive element, and any element can be expressed as a non-negative linear combination of elements contained in the basis.
Therefore, in particular, when $\tau$ is bi-recurrent, the dimension of $M^*(\tau)$ is equal to its topological dimension.

%

Until now, we have argued fixing a train track $\tau$.
We now turn to considering a finer train track $\tau'$ carried by $\tau$, and study the relation between $M^*(\tau')$ and $M^*(\tau)$.
For simplicity, we assume that all switches of $\tau'$ are also switches of $\tau$.
This condition can be realised by performing shifting operations on $\tau'$ evidently.
We can regard $\tau'$ as being immersed on $\tau$, and hence each branch of $\tau'$ as an ordered sequence of consecutive branches of $\tau$.

Now, let $w$ be a tangential measure on $\tau$.
For each branch $b'$ of $\tau'$, let $b_1, \dots, b_k$ be the branches (the same branch may appear more than once) which $b'$ passes in this order.
We note that since we assumed that every switch of $\tau'$ lies at that of $\tau$, if $b'$ passes a branch of $\tau$, then  it passes from one endpoint to the other entirely.
Then, we define a tangential measure $w'$ on $\tau'$ by setting $w'(b')$ to be $w(b_1)+ \dots + w(b_k)$.
It is easy to check that $w'$ is a tangential measure on $w'$.
We consider its equivalence class $[w']$ in $M^*(\tau')$.

\begin{lemma}
The equivalence class $[w']$ depends only on the equivalence class $[w]$ of $w$, and hence there is a linear map $\phi_{\tau, \tau'} \colon M^*(\tau) \to M^*(\tau')$ taking $[w]$ to $[w']$.
\end{lemma}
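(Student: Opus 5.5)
It suffices to treat a single elementary move. Since two tangential measures on $\tau$ are equivalent exactly when they are joined by a finite chain of elementary moves, it is enough to show the following: if $w$ and $\hat w$ differ by one elementary move at a switch $s$ of $\tau$, then the induced measures $w'$ and $\hat w'$ on $\tau'$ differ by elementary moves on $\tau'$, or at least by an element of the kernel subspace described before the lemma. Linearity then needs no separate argument. The assignment $w\mapsto w'$ is visibly linear, since $w'(b')$ is a sum of values of $w$. So once the kernel is shown to map into the kernel, the map descends to a linear map on the quotient vector spaces, and it carries positive classes to positive classes.

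Fix a switch $s$ of $\tau$, with incoming branches $b_1,b_2$ on one side and $b_3$ on the other. Let the move be $(v_1,v_2,v_3)\mapsto(v_1+c,v_2+c,v_3-c)$; the other type is the negative of this and is handled the same way. Then $\hat w-w$ is the function $\delta_s$ that equals $c$ on $b_1$ and $b_2$, equals $-c$ on $b_3$, and is $0$ elsewhere. If a branch of $\tau$ is incident to $s$ at both ends, this bookkeeping must add the contributions. I then compute $\delta_s'(b')=\sum_j\delta_s(b_{i_j})$ for each branch $b'$ of $\tau'$. Viewing $b'$ as an edge path in $\tau$, each time the path passes through $s$ from one side to the other, it enters along one of $b_1,b_2$ and leaves along $b_3$, or the reverse. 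Such a passage contributes $c-c=0$, so interior passages cancel. The only nonzero contributions come from the endpoints of $b'$ that lie at $s$. By the standing assumption, switches of $\tau'$ lie at switches of $\tau$, so these endpoints are switches $s'$ of $\tau'$ located at $s$.

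So $\delta_s'$ is a sum, over switches $s'$ of $\tau'$ lying at $s$, of local contributions. At such an $s'$ there are three branches $b_1',b_2',b_3'$ of $\tau'$, with $b_1',b_2'$ on one side and $b_3'$ on the other. Since $\tau'$ is immersed in $\tau$ and smooth at $s'$, the single branch $b_3'$ leaves $s'$ along one side of $s$. The two branches $b_1',b_2'$ leave along the opposite side, each through $b_1$, $b_2$ or $b_3$ according to the side. Two cases arise. If $b_3'$ starts along $b_3$, the contribution at $s'$ is $(+c,+c,-c)$ on $(b_1',b_2',b_3')$, which is exactly an elementary-move vector at $s'$. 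If $b_3'$ starts along $b_1$ or $b_2$, the contribution is $(-c,-c,+c)$, the opposite elementary-move vector. Hence $\delta_s'$ is a sum of elementary-move vectors at switches of $\tau'$, so $w'$ and $\hat w'$ are equivalent.

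The main obstacle is positivity. Individual elementary moves on $\tau'$ must keep the measure nonnegative, and the definition demands $c<v_3$ and so on. Note that $w'$ and $\hat w'$ are both genuine tangential measures, because $w$ and $\hat w$ are. I would handle the positivity issue in one of two ways. The first is to apply the moves at the various $s'$ one at a time in a suitable order, splitting $c$ into small pieces if needed so that each intermediate measure stays positive. The second, cleaner option is to use the paper's description of $M^*$ as the positive part of a quotient of the vector space by the span of the move vectors. In that setting, membership of $\delta_s'$ in this span is exactly what is needed, and I would phrase the final argument that way.
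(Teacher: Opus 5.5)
Your proposal is correct and follows essentially the same route as the paper: reduce to a single elementary move at a switch $s$ of $\tau$, observe that a branch of $\tau'$ passing through $s$ in its interior has unchanged $w'$-value, and check that at each switch of $\tau'$ lying at $s$ the induced change is an elementary move of one sign or the other on $\tau'$. Your bookkeeping is slightly more careful than the paper's in two places. First, you use only the side of $s$ along which the two small branches of $\tau'$ leave; they need not run along $b_1$ and $b_2$ respectively, as the paper's wording suggests. Second, you address the positivity constraint on moves, which the paper passes over by simply allowing $c$ of either sign.
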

\begin{proof}
Let $s$ be a switch of $\tau$, where two branches $b_1, b_2$ come from one direction and another one $b_3$ comes from the other direction.
If the interior of a branch $b'$ of $\tau'$ passes through $s$, then  it passes through either $b_1$ and $b_3$ or $b_2$ and $b_3$ each time it passes through $s$.
Since $w(b_1)$ and $w(b_3)$ are changed to $w(b_1)+c$ and $w(b_3)-c$, their sum is preserved.
The same holds for $w(b_2)$ and $w(b_3)$.
Thus we see that $w'(b')$ does not change in this case.

Next suppose that $b'$ has $s$ as one of its endpoints.
Then starting from $s$, the branch $b'$ proceeds in one of $b_1, b_2$ and $b_3$.
Suppose that it proceeds in $b_3$.
Then there are branches $b'_1, b'_2$ adjacent to $b'$ which proceed in $b_1$ and $b_2$ respectively.
By the elementary move changing $(w(b_1), w(b_2), w(b_3))$ to $(w(b_1)+c, w(b_2)+c, w(b_3)-c)$ with $c$ positive or negative, $(w'(b_1), w'(b_2), w')$ is also changed to $(w'(b_1')+c, w'(b_2')+c, w'(b_3)-c)$, and hence the equivalence class remains unchanged.
We can argue in the same way also when $b'$ proceeds to $b_1$ or $b_2$.
Thus we have shown that the equivalence class of $w'$ does not depend the choice of a representative of the class $[w]$, and hence $\phi_{\tau, \tau'}$ is well defined.
The linearity of the map $\phi_{\tau, \tau'}$ is obvious.
\end{proof}

For a chain-recurrent geodesic lamination $\lambda$, we consider a  train track carrying $\lambda$ such that each component $U$ of $S \setminus \tau$ is isotopic to  a deformation retract of the corresponding component $U'$ of $S\setminus \lambda$ in such a way each switch lying in $U$ corresponds to an ideal vertex of $U'$.
We call such a train track {\em adapted} to $\lambda$.
By definition, every branch of $\tau$ carries a leaf of $\lambda$, and hence $\tau$ is recurrent by the chain-recurrency of $\lambda$.
%


We shall next define tangential measures on a train track $\tau$ which measured foliations transverse to  $\tau$ induce.
This will be used in the next section essentially.
Let $\MF(S)$ be the space of isotopy classes of measured foliations on $S$ (including the empty set which corresponds to the origin), which is homeomorphic to the Euclidean space of dimension $6g-6$.
\begin{definition}
A measured foliation $F$ on $S$ is said to be transverse to a train track $\tau$ when $F$ is isotoped so that every branch of $\tau$ is transverse to the leaves of $F$ and contains no singularities of $F$.
Let $\MF_\tau$ be the subset of $\MF(S)$ consisting of the isotopy classes of all measured foliations transverse to $\tau$.
\end{definition}

For a measured foliation $F$ transverse to $\tau$, we let $w_F$  be a  function on the set of branches of $\tau$ defined by $w_F(b)=m_F(b)$, where $m_F$ denotes the transverse measure of $F$.
We can easily verify that $w_F$ is a tangential measure, and that isotopic measured foliations give the same equivalence class of tangential measure.
Therefore, we can define a map $w_\tau \colon \MF_\tau \to M^*(\tau)$.

\begin{lemma}
\label{surjection}
The map $w_\tau$ is a linear surjection.
\end{lemma}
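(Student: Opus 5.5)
The plan is to prove linearity and surjectivity separately. Both rest on one identification: a measured foliation transverse to $\tau$ is, up to isotopy, determined by a tangential measure on $\tau$, in the spirit of Penner--Harer's duality between measured foliations transverse to a train track and tangential measures \cite[Chap.~1.3]{PH}.

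\emph{Linearity.} The linear structure on $\MF_\tau$ I intend to use is the one pulled back through the transverse measures of the branches. Given $F_1, F_2 \in \MF_\tau$, I would choose a common model. Take a tie-neighbourhood $N(\tau)$ and a regular neighbourhood of the switches. Realise each $F_i$ as a foliation that, inside $N(\tau)$, consists of ties. The foliation on each complementary region $U$ is then modelled as a partial measured foliation of $U$ whose boundary masses along $\Fr U$ are the $w_{F_i}(b)$. Adding transverse measures in this common model gives a foliation $F_1+F_2$ whose branch masses are additive. Hence $w_{F_1+F_2}=w_{F_1}+w_{F_2}$, and scaling is clear. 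Because we pass to $M^*(\tau)$, the choice of isotopy representative (sliding singular leaves across switches) only changes $w_F$ by elementary moves, and these have already been quotiented out. So $w_\tau$ is well defined and linear.

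\emph{Surjectivity.} Let $w$ be a positive tangential measure on $\tau$. I would construct $F$ by hand.
\begin{enumerate}[(1)]
\item On each branch rectangle $b\times[0,1]$ of $N(\tau)$, put the foliation by ties with total transverse measure $w(b)$.
\item At a switch with incoming $b_3$ and outgoing $b_1,b_2$, the masses need not match. Use the elementary moves to first arrange the values so that gluing is possible; any discrepancy is absorbed into the adjacent complementary region as a prong.
\item On each simply connected complementary region $U$, a polygon whose sides are the branches $b_1,\dots,b_k$ with masses $w(b_j)$, construct a measured foliation transverse to the sides with these boundary masses and prong singularities. This exists exactly when the polygon inequalities $w(b_j)\le\sum_{i\ne j}w(b_i)$ of \cref{tangential measure}(1) hold; it is the standard construction of a singular foliation on a polygon from "edge lengths", equivalently a metric tree dual to the polygon.
\item On an annular region, condition (2) plays the same role. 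Put a foliation by arcs from $c$ to the other boundary, together with a cylinder of closed leaves absorbing the excess $\sum w(b_j)-w(c)$.
\end{enumerate}
Gluing all the pieces yields $F\in\MF_\tau$ with $w_F=w$. A non-positive element of the vector space is then reached by linearity, since every element is a difference of positive ones as noted before the lemma.

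The main obstacle is making the gluing at switches coherent. The foliation must stay transverse to every branch and have no singularities on $\tau$, while only the class $[w]$ is prescribed. I would first normalise $w$ by elementary moves so that at each switch $w(b_3)\le w(b_1)+w(b_2)$ with slack pushed into the cusp region. I would then verify that the polygon inequalities survive this normalisation, which I expect to require a careful but elementary check of the cusp angles.
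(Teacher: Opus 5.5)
Your steps (1), (3) and (4) are essentially the paper's proof. You foliate each branch rectangle of a tie neighbourhood by its ties with total transverse measure $w(b)$. You then extend over the complementary regions, and conditions (1) and (2) of the definition are exactly what make the extension possible.

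Step (2), and the ``main obstacle'' you build on it, come from a misreading of what a tangential measure measures. The leaves of the foliation on $N(\tau)$ are the ties. So its transverse measure is a measure on the leaf space of $N(\tau)$, which is $\tau$ itself. At a switch, the end-tie of $b_3$ is the union of the end-ties of $b_1$ and $b_2$. It is a single leaf, and there is nothing to match. Unlike weight systems, tangential measures satisfy no switch condition at all, so any representative $w$ of the class can be used as it stands. Transversality to $\tau$ and the absence of singularities on $\tau$ are then automatic. The prongs lie in the interiors of the complementary regions, and near a cusp the leaves are short arcs across it continuing ties of the two adjacent branches. So delete (2) altogether.

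The normalisation is also not harmless. A move $(v_1-d,v_2-d,v_3+d)$ lowers by $d$ each of the two sides of the cusp region between $b_1$ and $b_2$. For any other side of that region, the right-hand side of its polygon inequality therefore drops by $2d$, and the inequality can fail. The check you postponed is not automatic, and you do not need it.

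Three smaller corrections:
\begin{enumerate}[(1)]
\item In (4), closed leaves contribute nothing to the boundary masses, so a cylinder of closed leaves cannot absorb $\sum_j w(b_j)-w(c)$. The leaves leaving $c$ end on the cusped boundary component. The excess must be carried by arcs with both endpoints on that component, turning around cusps. This is possible because $w(c)\le\sum_j w(b_j)$.
\item You only treat discs and annuli with one smooth boundary component. Since $\lambda$ need not fill $S$, $\tau$ may have complementary regions of other topological types. The paper disposes of these separately: there the extension is unobstructed, e.g.\ by choosing a spine and making it a singular leaf, as in Fathi--Laudenbach--Po\'enaru.
\item Drop the sentence about non-positive elements. $M^*(\tau)$ consists only of classes of positive tangential measures, so there is nothing further to reach. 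In any case $\MF_\tau$ is a cone, in which differences are not defined.
\end{enumerate}
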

\begin{proof}
The linearity is evident.
We shall prove the surjectivity.
Let $w$ be a tangential measure on $\tau$.
Take a tied neighbourhood $N_\tau$ of $\tau$.
Recall that $N_\tau$ consists of tied rectangles each of which is a product of a branch and an interval.
Let $R_b$ be the rectangle corresponding to a branch $b$.
We give the ties of $R_b$ a transverse measure whose total measure is equal to $w(b)$.
This gives a measured foliation on $N_\tau$ each of whose leaves is either a tie or a union of two ties  corresponding to a switch.
We extend this measured foliation to the entire surface $S$ as a measured foliation.
For a simply-connected component and an annulus with one $C^1$-boundary component of $S \setminus N_\tau$, the conditions (1) and (2) in \cref{tangential measure} guarantee that this is possible.
For other types of components of $S \setminus N_\tau$, it is easy to extend it, in particular for a non-annular component,  by choosing a spine and make it a singular leaf.
(See \S5 Fathi--Laudenbach--Poénaru \cite{FLP}.)
By construction the $w_\tau$-image of the isotopy class of this foliation is $[w]$.
\end{proof}

The following is an immediate consequence of our definitions of the two maps.
\begin{lemma}
Suppose that $\tau'$ is carried by $\tau$.
Then we have $\phi_{\tau, \tau'}\circ w_\tau= w_{\tau'}|_{\MF_{\tau}}$. 
\end{lemma}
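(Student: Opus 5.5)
The identity $\phi_{\tau,\tau'}\circ w_\tau = w_{\tau'}|_{\MF_\tau}$ should follow by unwinding the definitions at the level of representatives. Two preliminary points come first. We need $\MF_\tau\subset \MF_{\tau'}$, so that the right-hand side is defined. And we want to pick good representatives. Since $\tau'$ is carried by $\tau$, we may isotope $\tau'$ into a tied neighbourhood $N_\tau$ so that it is transverse to the ties. As arranged before the definition of $\phi_{\tau,\tau'}$, every switch of $\tau'$ lies at a switch of $\tau$, and each branch $b'$ of $\tau'$ runs, in order, through full branches $b_1,\dots,b_k$ of $\tau$.

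Given $F\in\MF_\tau$, I would first isotope $F$ so that every branch of $\tau$ is transverse to $F$ and contains no singularity. I would then strengthen this so that, inside $N_\tau$, the leaves of $F$ are exactly the ties, just as in the construction in the proof of \cref{surjection}. The justification is that a transverse measured foliation can be straightened near the track so that its leaves through $N_\tau$ are ties.

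With this choice, $\tau'$ is isotoped within $N_\tau$, transverse to the ties, so it is automatically transverse to $F$ and avoids singularities. Hence $F\in \MF_{\tau'}$. Moreover, each branch $b'$ of $\tau'$ crosses the same ties as the concatenation $b_1\cdots b_k$, so $m_F(b')=\sum_i m_F(b_i)$. This is precisely $w_F'(b')$ for the induced tangential measure $w_F'$ defined before the lemma. Passing to classes gives $w_{\tau'}(F)=[w_F']=\phi_{\tau,\tau'}([w_F])=\phi_{\tau,\tau'}(w_\tau(F))$. The last equality uses that $\phi_{\tau,\tau'}$ is well defined on classes, which is the preceding lemma. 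The first equality uses that $w_{\tau'}$ does not depend on the isotopy representative of $F$, which was noted when $w_\tau$ was defined.

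The main, though mild, obstacle is the normalisation step: arranging simultaneously that $F$ is transverse to $\tau$, that its leaves in $N_\tau$ are ties, and that $\tau'$ sits inside $N_\tau$ transverse to those ties. I would handle it with a standard collapse/isotopy argument along the leaves of $F$ near $\tau$. The only point needing care is near the switches, where two ties are joined into a single leaf.
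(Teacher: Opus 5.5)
Your proposal is correct and takes the same route as the paper, which simply says the identity is an immediate consequence of the definitions of $w_\tau$, $w_{\tau'}$ and $\phi_{\tau,\tau'}$; your unwinding (straightening $F$ so that its leaves in $N_\tau$ are ties, then reading off $m_F(b')=\sum_i m_F(b_i)$) is exactly that verification. Your explicit check that $\MF_\tau\subset\MF_{\tau'}$ is a worthwhile addition, since the paper uses it tacitly to make sense of the restriction on the right-hand side.
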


%

The weight systems on $\tau$, which are regarded as transverse measures,  constitute the non-negative part of a vector space, which we denote by $M(\tau)$.
\begin{lemma}
\label{dim tangential}
Let $\tau$ be a  train track adapted to a chain-recurrent geodesic lamination $\lambda$.
The dimension of $M^*(\tau)$ is equal to the dimension of $M(\tau)$.
\end{lemma}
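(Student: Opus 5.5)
The plan is to reduce the statement to elementary linear algebra: both spaces are cut out by the same incidence data at the switches, one as a kernel and the other as a cokernel. Let $B$ be the set of branches and $V$ the set of switches of $\tau$. Define a linear map $A\colon \reals^{B}\to\reals^{V}$ by
\[
A(u)(s)=u(b_1)+u(b_2)-u(b_3),
\]
where $b_1,b_2$ are the two branches entering $s$ from one side and $b_3$ is the branch entering from the other side.

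First I will identify $M(\tau)$. A weight system is exactly a non-negative vector in $\ker A$. Since $\tau$ is adapted to the chain-recurrent lamination $\lambda$, it is recurrent, so there is a weight system $u_0$ that is positive on every branch. Then $u_0$ is an interior point of the cone $\ker A\cap \reals^{B}_{\geq 0}$ relative to $\ker A$. Hence the linear span of the weight systems is all of $\ker A$, and $\dim M(\tau)=\dim\ker A=|B|-\operatorname{rank}A$.

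Second I will identify $M^*(\tau)$. An elementary move at $s$ adds a multiple of the vector $e_{b_1}+e_{b_2}-e_{b_3}$, which is precisely $A^{T}(e_s)$. So the linear subspace generated by the moves is $\operatorname{Im}A^{T}$. The vector space underlying $M^*(\tau)$ is then $W/(W\cap \operatorname{Im}A^T)$, where $W$ is the linear span of the tangential measures. If I show that $W=\reals^{B}$, then
\[
\dim M^*(\tau)=|B|-\operatorname{rank}A^{T}=|B|-\operatorname{rank}A=\dim M(\tau),
\]
which is the claim.

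So the key step, and the one I expect to be the main obstacle, is showing that the tangential measures span $\reals^{B}$. Equivalently, the cone in $\reals^B$ cut out by the inequalities (1) and (2) of \cref{tangential measure} and by $w\geq 0$ must have nonempty interior. For this I will exhibit a tangential measure for which every inequality is strict and every entry is positive; small perturbations of it then stay in the cone.

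Because $\tau$ is adapted to $\lambda$, each complementary region corresponds to a component of $S\setminus\lambda$ and has at least as many sides as that component has ideal vertices. A simply-connected region is an ideal polygon with at least $3$ sides. First try $w\equiv 1$. For a region with $k\geq 3$ sides, condition (1) becomes $1\leq k-1$, which is strict. The only possible failure is an annular region whose non-closed boundary consists of a single branch, where condition (2) reads $1\leq 1$. In that case I will lower the value on the closed branch $c$ to $1/2$. I then need to recheck condition (1) for the region on the other side of $c$. There $c$ is one side among at least three others of weight $1$, so the inequalities stay strict. The same argument applies to non-simply-connected, non-annular regions, which impose no constraint.

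If this explicit choice runs into a degenerate configuration, the fallback is to use the surjection $w_\tau$ of \cref{surjection} together with the fact that $\MF_\tau$ is an open subset of $\MF(S)$. Pulling this back to branch values gives an open set of tangential measures in $\reals^{B}$, hence again $W=\reals^B$.
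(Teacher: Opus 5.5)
Your argument is correct, and it takes a genuinely different route from the paper.

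\textbf{The paper's route.} The paper pairs weights and tangential measures by $i(v,w)=\sum_b v(b)w(b)$ and argues geometrically that this pairing is non-degenerate on both sides. Tangential measures come from measured foliations on the minimal supporting surface $\Sigma$, and these are detected by pants curves and transversals carried by $\tau$. Weight systems are detected by finitely many arcs regarded as foliations transverse to $\tau$.

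\textbf{Your route.} You note that weight systems are the non-negative vectors of $\ker A$ and that the elementary moves span $\operatorname{Im}A^{T}$. The paper's pairing is then just the canonical duality between $\ker A$ and $\reals^{B}/\operatorname{Im}A^{T}$, and equality of dimensions is rank--nullity. The only inputs are recurrence, so that weights span $\ker A$, and one tangential measure satisfying every inequality strictly, so that tangential measures span $\reals^{B}$.

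\textbf{Comparison.} Your version makes the hypotheses transparent and applies to any recurrent track with such a strictly interior tangential measure. The paper's argument ties the dimension directly to measured foliations and laminations, in the form later used through $w_\tau$. However, its non-degeneracy claims are only sketched, whereas in your setting non-degeneracy is automatic.

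\textbf{Two small corrections.}

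First, your recheck is simpler than you describe, because the region across $c$ is never simply connected. A single branch closing up smoothly through one switch forces weight $0$ on the third branch at that switch. So, by recurrence, $c$ is a switchless component of $\tau$. Both neighbouring regions then have $c$ as a smooth boundary circle, and $c$ enters only conditions of type (2), where lowering $w(c)$ only helps.

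Second, the fallback would not work as stated. $\MF(S)$ has dimension $6g-6$, while $|B|$ can be larger; for a maximal track $|B|=18g-18$. So openness of $\MF_\tau$ gives no open subset of $\reals^{B}$. You do not need the fallback, since the explicit interior point already suffices.
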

\begin{proof}
Let $\Sigma$ be the minimal supporting surface of $\lambda$.
For any tangential measure $w$ on $\tau$,  by the construction of the proof of \cref{surjection}, there is a measured foliation $F$ transverse to $\Fr \Sigma$ such that $w_\tau(F)=w$.
Following \cite{PH}, for a weight system $v$ on $\tau$, we can consider the intersection number $i(v,w)$ between $v$ and $w$, defined by $i(v,w)=\sum_{b} v(b) w(b)$, where $b$ ranges over the branches of $\tau$.
This defines a bi-linear function $i \colon M(\tau) \times M^*(\tau) \to \reals_+$.
Measured foliations on $\Sigma$ transverse to the boundary are uniquely determined by the measure of pants decomposition and the transversals.
Since such curves can be taken to be carried by $\tau$, we see that the intersection number is non-degenerate with respect to $M^*(\tau)$.
In the same way, the measured laminations carried by $\tau$ are uniquely determined by the measure of finitely many arcs transverse to $\Fr \Sigma$ which can be regarded as measured foliations transverse to $\tau$.
Therefore, the intersection number is also non-degenerate with respect to the space of weight systems on $\tau$.
This shows that their dimensions are the same.
\end{proof}

\begin{lemma}
\label{same dimension}
Suppose that two train tracks $\tau_1$ and $\tau_2$ are both adapted to $\lambda$.
Then $\dim M^*(\tau_1)=\dim M^*(\tau_2)$.
\end{lemma}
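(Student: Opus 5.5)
The plan is to reduce the statement to the corresponding fact for weight systems, using \cref{dim tangential}. That lemma gives $\dim M^*(\tau_i)=\dim M(\tau_i)$ for $i=1,2$, so it suffices to show $\dim M(\tau_1)=\dim M(\tau_2)$. The point is that weight systems on a train track adapted to $\lambda$ are controlled by $\lambda$ itself and not by the particular track.

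First I will relate the two tracks combinatorially. Both $\tau_1$ and $\tau_2$ are adapted to $\lambda$: each complementary region of $\tau_i$ is isotopic to a deformation retract of the corresponding component of $S\setminus\lambda$, and its cusps correspond to the ideal vertices. Using this, I will show that $\tau_1$ and $\tau_2$ differ by isotopy together with a finite sequence of splittings and shiftings (and their inverses) which preserve the property of being adapted to $\lambda$. Concretely, take a sufficiently thin tied neighbourhood of $\lambda$ whose ties collapse to give a track $\tau_0$ adapted to $\lambda$ and carried by both $\tau_1$ and $\tau_2$. Thin neighbourhoods of $\lambda$ that are fine enough are nested inside the tied neighbourhoods of each $\tau_i$. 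Then $\tau_0$ is obtained from each $\tau_i$ by splittings along $\lambda$, and no collision occurs, since the complementary regions already match those of $\lambda$.

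Next I will show that a single such splitting, say from $\tau$ to $\tau'$ with both adapted to $\lambda$, preserves $\dim M(\tau)$. An Euler characteristic count works here. The dimension of the weight space of a trivalent track equals $\#\text{branches}-\#\text{switches}$ plus the number of independent relations lost to non-orientability. Alternatively, and more robustly, the dimension is determined by the measured laminations supported on $\lambda$ together with the cells of the complement. Since the complementary regions of $\tau$ and $\tau'$ are the same (those of $\lambda$), the numbers of switches, which correspond to cusps, agree. The numbers of branches agree as well, because $3\cdot\#\text{switches}=2\cdot\#\text{branches}$ for trivalent tracks once simple closed branches are treated separately, and these come from closed leaves of $\lambda$. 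The orientability type of $\tau$ and $\tau'$ near each component is also inherited from $\lambda$. Hence the number of independent switch conditions coincides, so $\dim M(\tau)=\dim M(\tau')$.

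The main obstacle is the first step: making the common refinement $\tau_0$ rigorous and checking that it is reached from each $\tau_i$ only by moves that keep the track adapted. If that becomes delicate, I would fall back on the invariant count directly. Each complementary region of $\tau_i$ has the Euler characteristic and cusp number of the corresponding region of $S\setminus\lambda$. From this I would compute $\#\text{switches}$ and $\#\text{branches}$ of $\tau_i$ purely from $\lambda$ via $\chi(S)=\chi(\tau_i)+\sum\chi(U)$ and the cusp count. The rank of the switch conditions is also determined by $\lambda$, namely by whether each component of the track is orientable, which is detected by the orientability of the corresponding minimal component of $\lambda$. This shows that $\dim M(\tau_i)$ is a function of $\lambda$ alone.
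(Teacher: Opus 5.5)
Your argument is correct and follows the same skeleton as the paper's proof:
\begin{enumerate}[(1)]
\item reduce to $\dim M(\tau_i)$ via \cref{dim tangential};
\item pass to a common adapted refinement obtained from each $\tau_i$ by shiftings and non-collision splittings;
\item use that these moves preserve $\dim M$.
\end{enumerate}
The paper only asserts the invariance in step (3). Your switch/branch count is what actually justifies it, and it proves more. For any $\tau$ adapted to $\lambda$:
\begin{enumerate}[(a)]
\item switches correspond to spikes of $S\setminus\lambda$;
\item $2\#\{\text{branches}\}=3\#\{\text{switches}\}$ away from closed-curve components;
\item on a connected component with switches, the switch equations have rank $\#\{\text{switches}\}-1$ if that component is orientable, and $\#\{\text{switches}\}$ if it is not.
\end{enumerate}
Adapted tracks are recurrent, so the nonnegative cone spans the solution space. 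Hence $\dim M(\tau)=\tfrac12\#\{\text{spikes of }S\setminus\lambda\}+\#\{\text{orientable components of }\lambda\}$ for every adapted $\tau$. Your fallback is therefore the whole proof, and the common refinement you flagged as the main obstacle can be dropped. As a check, this formula gives $C=6g-6$ for maximal $\lambda$ and $C=1$ for a simple closed curve. Via \cref{dimension}, these are consistent with \cref{extreme} and with Thurston's facets.

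Two corrections are needed.

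First, the extra $+1$ comes from \emph{orientable} components, where one switch relation becomes dependent, not from non-orientable ones.

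Second, the relevant orientability is that of the whole connected component of $\lambda$, isolated leaves included, not that of a minimal component. Here is a counterexample to the minimal-component version. Take two disjoint non-separating closed geodesics $a,b$. Add four isolated leaves, one joining each side of $a$ to each side of $b$, spiralling with opposite senses on the two sides of each curve. This is a chain-recurrent lamination. Its adapted track carries a closed train path running along $a$ in both directions, so the track is non-orientable, even though $a$ and $b$ are orientable.

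Since an adapted track fully carries $\lambda$, its orientability agrees with that of the corresponding connected component of $\lambda$. So the count still depends on $\lambda$ alone.
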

\begin{proof}
Since both $\tau_1$ and $\tau_2$ are adapted to $\lambda$, there is a train track $\tau_3$ adapted to $\lambda$ which is carried by both $\tau_1$ and $\tau_2$.
We consider the map $\tau_{\tau_1, \tau_3} \colon M^*(\tau_1) \to M^*(\tau_3)$.

Since $\tau_1$  is adapted to $\lambda$, we can obtain $\tau_3$ from $\tau_1$  by shifting and splitting without \lq collision type' splitting.
Then $M(\tau_3)$ has the same dimension as  $M(\tau_1)$.
Therefore by \cref{dim tangential}, we see that $\dim M^*(\tau_1)=\dim M^*(\tau_3)$.
The same holds for $\tau_2$, and we have $\dim M^*(\tau_1)=\dim M^*(\tau_2)$.
\end{proof}

For a train track adapted to a chain-recurrent geodesic lamination this lemma combined with \cref{surjection} implies the following.

\begin{corollary}
\label{bi-recurrent}
A train track adapted to a chain-recurrent geodesic lamination is bi-recurrent.
\end{corollary}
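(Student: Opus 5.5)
We need to show that a train track $\tau$ adapted to a chain-recurrent lamination $\lambda$ is bi-recurrent. This means two things: it is recurrent, and it carries a tangential measure that is positive on every branch.

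Recurrence was already observed right after the definition of adapted train tracks. Every branch of $\tau$ carries a leaf of $\lambda$. Since $\lambda$ is a Hausdorff limit of multicurves $c_n$, for large $n$ the multicurve $c_n$ is carried by $\tau$ and traverses every branch. Its counting weights then give a weight system that is positive on every branch. So the work is in producing a positive tangential measure.

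For positivity, the plan is to use the surjection $w_\tau\colon \MF_\tau \to M^*(\tau)$ of \cref{surjection}, together with \cref{dim tangential}, in reverse. I would first show that some measured foliation $F$ transverse to $\tau$ gives every branch positive measure. Then $w_F$ is a tangential measure with all values positive.

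\begin{enumerate}[(1)]
\item Take the minimal supporting surface $\Sigma$ of $\lambda$ and a pants decomposition of $\Sigma$, together with transversal arcs, all carried by $\tau$ as in the proof of \cref{dim tangential}.
\item Build a measured foliation $F$ on $\Sigma$ transverse to $\Fr\Sigma$ whose measure is positive on each of these curves and arcs, and extend it to all of $S$.
\item Isotope $F$ to be transverse to $\tau$, as in the proof of \cref{surjection}. Since $\tau$ is adapted to $\lambda$, every branch lies in $\Sigma$ and is crossed by leaves of $F$, so $m_F(b)>0$ for every branch $b$.
\end{enumerate}

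A cleaner alternative uses the non-degeneracy of the pairing $i\colon M(\tau)\times M^*(\tau)\to\reals_+$ from \cref{dim tangential}. For a given branch $b$, take a positive weight system $v$ as above; it exists because $\tau$ is recurrent. Perturb $v$ so that its $b$-coordinate differs from that of $v$ while every other coordinate is unchanged; this is possible because $b$ lies in the support of some carried measured lamination. Non-degeneracy then forces the existence of some $[w]\in M^*(\tau)$ that pairs differently with the two weight systems. Hence some representative $w$ has $w(b)>0$. Summing such representatives over all branches gives a tangential measure positive everywhere, since the inequalities (1) and (2) of \cref{tangential measure} are preserved under sums.

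The main obstacle is the passage from classes in $M^*(\tau)$ to actual representatives. A class can be nonzero while a particular representative vanishes on $b$, and the elementary moves change individual branch values. I would handle this with a direct positivity argument. Take any representative $w$ with $w(b)>0$ for the given $b$. At each switch where some branch has value $0$, apply small elementary moves in the permitted direction to push positive mass onto the zero branches. The constraint $c<v_3$ is satisfiable exactly when a neighbouring branch is positive, and the trivalent switch structure lets positivity propagate across the connected track $\tau$. I would spend the most care verifying that these moves never violate conditions (1) and (2) for the complementary regions.
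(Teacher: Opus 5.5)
Your recurrence argument is fine and matches the remark made right after the definition of adapted tracks. The gap is in the positivity part, exactly at step (3).

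A measured foliation transverse to $\tau$ and a tangential measure positive on every branch are essentially equivalent data: $F\mapsto w_F$ goes one way, and the construction in the proof of \cref{surjection} goes the other. So ``isotope $F$ to be transverse to $\tau$'' is not a routine step; it is the entire content of the corollary. The proof of \cref{surjection} starts from a given $w$, so it cannot be used to make a given $F$ transverse.

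Prescribing positive measure on finitely many carried curves and arcs does not help. If $F\in\MF_\tau$, then every nonzero measured lamination $\mu$ carried by $\tau$ with weights $v$ has $i(F,\mu)=\sum_b v(b)w_F(b)>0$. Nothing in your construction excludes $i(F,\gamma)=0$ for some other curve $\gamma$ carried by $\tau$ (say, $F$ having a cylinder of closed leaves isotopic to $\gamma$). Your assertion that every branch ``is crossed by leaves of $F$'' already assumes the transversality.

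Your alternative route fails at several points:
\begin{enumerate}[(1)]
\item In general a weight system cannot be modified on the single branch $b$ with all other values fixed, since the switch equations at the endpoints of $b$ would break.
\item Even granting that, non-degeneracy of the pairing yields only a vector-space element with nonzero $b$-coordinate of either sign. It does not give a non-negative function satisfying (1) and (2) of \cref{tangential measure}.
\item The elementary moves give no general mechanism for propagating positivity. At a switch with $v_1=v_3=0<v_2$ neither move is available, because one requires $0<c<v_3$ and the other $0<d<\min\{v_1,v_2\}$.
\end{enumerate}

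The missing idea is to begin with a foliation transverse to the lamination $\lambda$ itself, rather than one pinned down by finitely many intersection numbers. Such an $F$ exists, for example the horocyclic foliation of a maximal lamination containing $\lambda$. Take a sufficiently fine adapted track $\tau'$, obtained by collapsing a thin tie neighbourhood of $\lambda$ whose ties are arcs of leaves of $F$. Then $F\in\MF_{\tau'}$, so $w_F$ is positive on every branch of $\tau'$. This is the paper's argument.

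The paper then passes to an arbitrary adapted track via \cref{same dimension}. Be aware that this transfer, as written, only gives $\dim M^*(\tau)>0$. So the class-versus-representative issue you raised is a real one for coarse adapted tracks.

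Your remark that tangential measures can be summed is the right way to conclude. But it requires, for each branch $b$, a genuine tangential measure on $\tau$ (non-negative, satisfying (1) and (2)) that is positive on $b$. Neither of your routes supplies one.
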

\begin{proof}
Let $\lambda$ be a chain-recurrent geodesic lamination, and suppose that $\tau$ is a train track adapted to $\lambda$.
Take any measured foliation $F$ transverse to $\lambda$.
Then if we take a sufficiently fine train track $\tau$ adapted to $\lambda$, we see that $F$ lies in $\MF_\tau$, and hence $M^*(\tau)$ has a non-zero image $w_\tau(F)$.
In particular, $\dim M^*(\tau)>0$.
By \cref{same dimension}, this holds for every train track adapted to $\lambda$, and hence it has a non-trivial tangential measure.
\end{proof}

Now we define the complexity of  a chain-recurrent geodesic lamination.
\begin{definition}
\label{complexity}
Let $\lambda$ be a chain-recurrent geodesic lamination.
We define the complexity of $\lambda$ to be the dimension of $M^*(\tau)$ for any train track $\tau$ adapted to $\lambda$, which was shown to be independent of the choice of $\tau$ by \cref{same dimension}, and denote it by $C(\lambda)$.
\end{definition}

The complexity which we  defined in \cref{complexity} turns out to be upper semi-continuous with respect to the Hausdorff topology.
\begin{lemma}
\label{complexity semi-continuous}
Let $(\lambda_i)$ be a sequence of  chain-recurrent geodesic laminations which converges to a chain-recurrent geodesic lamination $\mu$ in the Hausdorff topology.
Then $C(\mu)\geq \limsup_{i \to \infty} C(\lambda_i)$.
\end{lemma}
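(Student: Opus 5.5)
Upper semi-continuity is proved by passing to a common train track. Fix a train track $\tau$ adapted to $\mu$. Since $(\lambda_i)$ converges to $\mu$ in the Hausdorff topology, for all sufficiently large $i$ the lamination $\lambda_i$ lies in a small neighbourhood of $\mu$ and is therefore carried by $\tau$. More precisely, it is carried by a sub-train track $\sigma_i \subset \tau$, namely the union of the branches actually traversed by $\lambda_i$. Passing to a subsequence realising the $\limsup$, and noting that $\tau$ has finitely many sub-train tracks, we may assume that $\sigma_i=\sigma$ is independent of $i$. We then compare $C(\lambda_i)$ with $\dim M^*(\tau)=C(\mu)$ in two steps: first $\dim M^*(\tau)\geq \dim M^*(\sigma)$, and second $\dim M^*(\sigma)\geq C(\lambda_i)$.

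\emph{First step.} We use \cref{dim tangential}, which converts the problem to weight systems. The space of weight systems $M(\sigma)$ embeds linearly in $M(\tau)$ by extending weights by zero on $\tau\setminus\sigma$, so $\dim M(\sigma)\leq \dim M(\tau)$. To apply the equality $\dim M^*=\dim M$ to $\sigma$, we need $\sigma$ to be adapted to some chain-recurrent lamination, and it is natural to take $\lambda_i$ itself. This is not literally true, since the complementary regions of $\sigma$ may be larger than those of $\lambda_i$. So, instead of a sub-track, we split $\tau$ along $\lambda_i$, without collision-type splittings, to obtain a train track $\tau_i$ adapted to $\lambda_i$ and carried by $\tau$. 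The carrying map then induces the linear map on weight systems $M(\tau_i)\to M(\tau)$ given by pushing weights forward. This map is injective, because a weight system on a track carried by $\tau$ is determined by its image when the carrying is a composition of splittings and shiftings. Alternatively, and more robustly, one can argue through the intersection pairing of \cref{dim tangential} and the map $\phi_{\tau,\tau_i}$ together with \cref{surjection}: one shows that $\phi_{\tau,\tau_i}\colon M^*(\tau)\to M^*(\tau_i)$ is surjective.

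\emph{Second step: surjectivity of $\phi_{\tau,\tau_i}$.} Given a tangential measure class on $\tau_i$, we realise it by a measured foliation $F$ transverse to $\tau_i$ using \cref{surjection}. Since $\tau_i$ is obtained from $\tau$ by splittings, we then isotope $F$ to be transverse to $\tau$ as well. The relation $\phi_{\tau,\tau_i}\circ w_\tau = w_{\tau_i}|_{\MF_\tau}$ then gives surjectivity. Hence
\[
C(\lambda_i)=\dim M^*(\tau_i)\leq \dim M^*(\tau)=C(\mu),
\]
and taking the $\limsup$ finishes the proof.

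\emph{Main obstacle.} The delicate point is justifying that a foliation transverse to the finer track $\tau_i$ can be made transverse to $\tau$. This may fail when the complementary regions of $\lambda_i$ are more complicated than those of $\mu$: for instance, $\lambda_i$ may have extra non-simply-connected complementary pieces that collapse in the limit. If this fails, I would switch to the weight-system side. There, the injectivity of $M(\tau_i)\to M(\tau)$ is a standard consequence of the fact that splitting and shifting without collisions preserve the linear structure of weight spaces. Combining that injectivity with \cref{dim tangential} applied to both $\tau_i$ and $\tau$ gives the same inequality.
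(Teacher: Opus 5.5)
Your weight-system argument is correct in substance and close to the paper's, but it gets the key inequality differently. The paper fixes $\sigma$ adapted to $\mu$ and takes $\tau_i$ adapted to $\lambda_i$ and carried by $\sigma$. It then embeds $\tau_i$ as a \emph{sub-track} of a train track $\sigma_i$ that is still adapted to $\mu$ and carried by $\sigma$. The inequality $\dim M(\tau_i)\le\dim M(\sigma_i)$ is then just extension of weights by zero, and \cref{same dimension} replaces $\sigma_i$ by $\sigma$. You instead push weights forward along the carrying $\tau_i\to\tau$ and use injectivity of that map. This spares you constructing $\sigma_i$ (which the paper asserts without detail) and spares you \cref{same dimension}, at the cost of an injectivity statement for carrying maps.

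Two points need correcting. First, $\tau_i$ cannot in general be obtained from $\tau$ by splittings and shiftings \emph{without} collisions. Those moves preserve $\dim M$, so together with \cref{dim tangential} they would force $C(\lambda_i)=C(\mu)$. But multicurves, for which $C\le 3g-3$, converge to maximal chain-recurrent laminations, for which $C=6g-6$. You must also allow central splittings or pass to a sub-track. This is exactly the role of the paper's $\sigma_i$: non-collision moves take $\sigma$ to $\sigma_i$, and then one passes to the sub-track $\tau_i$. Your injectivity claim survives, because central splittings and sub-track inclusions also induce injective maps on weight spaces. Alternatively: weights of a measured lamination carried by $\tau$ are unique, and the positive cone spans $M(\tau_i)$ because $\tau_i$ is recurrent.

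Second, your argument for surjectivity of $\phi_{\tau,\tau_i}$ runs the wrong way. A foliation transverse to $\tau$ is transverse to the carried track $\tau_i$, i.e.\ $\MF_\tau\subset\MF_{\tau_i}$, not conversely. You do not need that route, though. Writing $p\colon M(\tau_i)\to M(\tau)$ for the pushforward, one has $i(p(v),w)=i(v,\phi_{\tau,\tau_i}(w))$. So, by the non-degeneracy in \cref{dim tangential}, surjectivity of $\phi_{\tau,\tau_i}$ is equivalent to injectivity of $p$.
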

\begin{proof}
Let $\sigma$ be a  train track adapted to $\mu$.
Since $(\lambda_i)$ converges to $\mu$ in the Hausdorff topology, we can choose a train track $\tau_i$ adapted to $\lambda_i$ so that $\tau_i$ is carried by $\sigma$ for sufficiently large $i$.
Then $\tau_i$ is a sub-track of a train track $\sigma_i$ adapted to $\mu$ which is also carried by $\sigma$.
By \cref{same dimension}, $\dim M^*(\sigma_i)=\dim M^*(\sigma)$, whereas by \cref{dim tangential} $\dim M^*(\tau_i)=\dim M(\tau_i) \leq \dim M(\sigma_i)= \dim M^*(\sigma_i)$, and we are done.
\end{proof}

%

\section{Dimensions of faces}
In  \cite[Theorem 10.1]{ThM}, Thurston showed that for each $x\in \mathcal{T}(S)$, the complement of the \lq facets' in $\mathcal{S}_x$, that is, the faces of maximal dimension,  which correspond (using our notation in \cref{face}) to the $F_\lambda$  with $\lambda$ being a simple closed geodesic, has positive Hausdorff codimension.
We shall show that an argument similar to his proof of this theorem gives a description of the dimension of a face $F_\lambda$ in terms of $C(\lambda)$ which we defined in the preceding section.

\begin{theorem}
\label{dimension}
Let $F_\lambda$ be a face of $\mathcal S_x$ associated with a chain-recurrent geodesic lamination $\lambda$ as in \cref{face}.
Then  we have
$$\dim(F_\lambda)=6g-6-C(\lambda).$$
\end{theorem}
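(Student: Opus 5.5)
We follow Thurston's argument for \cite[Theorem 10.1]{ThM}, replacing the single curve by a train track adapted to $\lambda$. The tool is cataclysm coordinates. Fix a maximal chain-recurrent geodesic lamination $\hat\lambda \supset \lambda$. The map $\cat_{\hat\lambda}\colon \MF_{\hat\lambda}\to \teich(S)$ is a homeomorphism onto $\teich(S)$, and we assume, as in \cite{ThM}, that it is differentiable at the horocyclic foliation $F_x(\hat\lambda)$ of $x$. In these coordinates the stretch ray along a maximal $\mu$ is $t\mapsto e^t F_x(\mu)$. Choose a train track $\tau$ adapted to $\lambda$ and a neighbourhood of $x$ on which every relevant horocyclic foliation is transverse to $\tau$. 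Composing with the linear surjection $w_\tau\colon \MF_\tau\to M^*(\tau)$ of \cref{surjection}, we obtain near $x$ a smooth map $\Psi\colon \teich(S)\to M^*(\tau)$, $y\mapsto w_\tau(F_y)$. The target has dimension $C(\lambda)$ by \cref{complexity}. The idea is that $\Psi$ records how the hyperbolic structure looks from the point of view of $\lambda$: the transverse horocyclic data along the branches of $\tau$, modulo the switch moves, which are exactly the ambiguities coming from where the horocyclic arcs start in the spikes.

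\emph{Step 1: the face lies in an affine subspace of codimension $C(\lambda)$.} For every maximal chain-recurrent $\mu\supset\lambda$, the horocyclic foliations of $\mu$ and of $\lambda$ agree on $\tau$ up to equivalence of tangential measures, because the restriction of the horocyclic foliation to a neighbourhood of $\lambda$ only depends on $\lambda$ and $x$. Along the stretch ray of $\mu$ this restriction scales by $e^t$. Differentiating at $t=0$ gives
\[
 d\Psi_x(\v_\mu(x))=\Psi(x),
\]
independently of $\mu$. By \cref{face}, $F_\lambda$ is the convex hull of the $\v_\mu(x)$ with $\mu$ maximal containing $\lambda$, so $F_\lambda\subset d\Psi_x^{-1}(\Psi(x))$. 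We will check that $d\Psi_x$ is surjective, using \cref{surjection} and the fact that cataclysm coordinates are a local diffeomorphism. Granting this, $d\Psi_x^{-1}(\Psi(x))$ is an affine subspace of dimension $6g-6-C(\lambda)$, which gives $\dim F_\lambda\le 6g-6-C(\lambda)$.

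\emph{Step 2: the reverse inequality.} We need $6g-6-C(\lambda)+1$ affinely independent stretch vectors $\v_\mu(x)$ with $\mu\supset\lambda$. We parametrise the maximal chain-recurrent completions $\mu$ of $\lambda$, obtained by adding leaves carried by train tracks extending $\tau$, by the transverse data in the complementary region of $\lambda$ and in the spiralling directions. In cataclysm coordinates these completions move $F_x(\mu)$ over an open subset of the fibre $w_\tau^{-1}(\Psi(x))$, whose dimension is $6g-6-C(\lambda)$.

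I would first try to realise this concretely. Approximate $\lambda$ by $\lambda\cup c$, where $c$ ranges over many simple closed curves and spiralling leaves in $S\setminus\lambda$. Then compare with Thurston's computation for a single closed geodesic $\gamma$, where $C(\gamma)=1$ and the facet has dimension $6g-7$, and induct on the complementary subsurfaces. As a consistency check, \cref{complexity semi-continuous} together with \cref{n-dim limit} and \cref{face Hausdorff} shows that the formula is compatible with Hausdorff limits.

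The main obstacle is Step 2, or equivalently showing that the span of $\{\v_\mu(x)-\v_{\mu'}(x)\}$ fills the whole kernel of $d\Psi_x$. This needs a genuine independence statement for derivatives of horocyclic foliations as the completion varies. I expect it to require identifying $\ker d\Psi_x$ with the tangential measures on the complement of $\tau$, via the duality of \cref{dim tangential}.
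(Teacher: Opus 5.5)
\textbf{Verdict: genuine gap.} You prove only the upper bound $\dim F_\lambda\le 6g-6-C(\lambda)$; the reverse inequality is left open, and the route you sketch for it cannot work.

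Your Step 1 is essentially the paper's argument. The paper also composes the differential of a cataclysm chart for a maximal completion of $\lambda$ with $w_\tau$, for a sub-track $\tau$ adapted to $\lambda$. It observes that $F_\lambda$ is sent into $\reals_+\Psi(x)$ and counts dimensions in the preimage, which gives the upper bound.

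The problem is the mechanism in Step 2. The maximal chain-recurrent completions of $\lambda$ are not a continuous family parametrised by transverse data.
\begin{itemize}
\item If every component of $S\setminus\lambda$ is an ideal polygon, the only completions come from triangulating the polygons. There are finitely many of them, and by \cref{face} $F_\lambda$ is a polytope with finitely many vertices.
\item In that case your induction through $\lambda\cup c$, with $c$ a closed curve in $S\setminus\lambda$, has nothing to start from.
\item More generally, every $\v_\mu(x)$ is an extreme point by \cref{extreme}. So stretch vectors can never fill a relatively open subset of the fibre $d\Psi_x^{-1}(\Psi(x))$; only their convex hull can, and showing that it does is exactly the content of the theorem.
\item Your consistency check does not help. \cref{n-dim limit} and \cref{complexity semi-continuous} only bound dimensions of limit faces from above.
\end{itemize}

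What is missing is a converse to Step 1: near an interior point $v$ of $F_\lambda$, every unit vector $u$ with $d\Psi_x(u)\in\reals_+\Psi(x)$ already lies in $F_\lambda$. This suffices for the lower bound. Take $a$ in the fibre near $v$. Then $a/\Vert a\Vert_{\th}$ is a unit vector near $v$ with image in $\reals_+\Psi(x)$, so it lies in $F_\lambda\subset d\Psi_x^{-1}(\Psi(x))$. This forces $\Vert a\Vert_{\th}=1$ and hence $a\in F_\lambda$. So $F_\lambda$ contains a neighbourhood of $v$ in the fibre, and $\dim F_\lambda\ge 6g-6-C(\lambda)$ once $d\Psi_x$ is surjective. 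No affinely independent stretch vectors need to be exhibited.

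This is the route the paper takes. For $u_i\to v$ with $u_i\notin F_\lambda$, it uses \cref{face Hausdorff} to place $u_i$ in faces $F_{\lambda_i}$ with $\lambda_i$ carried by $\tau$ but not containing $\lambda$. Hence $u_i$ does not stretch $\lambda$ maximally, and $d\Psi_x(u_i)\notin\reals_+\Psi(x)$.

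The substantive input there is that the condition $d\Psi_x(u)\in\reals_+\Psi(x)$ characterises the vectors stretching $\lambda$ maximally. The paper treats this as evident, but your plan would have to supply it. It is a different statement from the independence of horocyclic-foliation derivatives over completions that you propose to prove.
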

\begin{proof}
In the same way as in the argument in \S10 of \cite{ThM}, we shall use the cataclysm coordinates for the proof.
As shown in \cite[p.\ 50]{ThM}, the chain-recurrent geodesic lamination $\lambda$ can be extended to a maximal chain recurrent geodesic lamination $\mu$.

%
%
Thurston's cataclysm coordinates with respect to $\mu$ give a diffeomorphism $c_\mu \colon \MF_\mu \to \teich(S)$, where $\MF_\mu$ denotes the subspace of $\MF(S)$ consisting of all measured foliations transverse to $\mu$ up to isotopy.
Choose a point $x$ in $\mathcal{T}(S)$, and let $G$ be the measured foliation with $c_\mu(G)=x$.

Take a train track $\sigma$ adapted to $\mu$ which is transverse to $G$.
Since $\lambda$ is a sublamination of $\mu$, there is a train track $\tau$ adapted to $\lambda$ which is a sub-track of $\sigma$.
Since $\MF_\sigma$ is a subspace of $\MF_\mu$, restricting $c_\mu$ to $\MF_\sigma$, we get a map $c_\sigma \colon \MF_\sigma \to \teich(S)$.
Let $dc_\sigma\colon \MF(S) \to T_x \teich(S)$ be its differential at $G$, where we have identified the tangent space of $\MF_\sigma$ with its ambient space $\MF(S)$.

Since $\tau$ is a sub-track of $\sigma$, any foliation in $\MF_\sigma$ is also contained in $\MF_\tau$, and hence we can  consider the map $\mathbf w \colon \MF_\sigma  \to M^*(\tau)$ taking $F \in \MF_\sigma$ to  $w_\tau(F)\in M^*(\tau)$ defined in the preceding section, regarding $F$ as a foliation in $\MF_\tau$.
Let  $d\mathbf w \colon \MF_\sigma \to M^*(\tau)$ be the differential of  $\mathbf w$ at  $G$ restricted to $\MF_\sigma$, identifying the positive part of the tangent space of $M^*(\tau)$  with $M^*(\tau)$.

On the other hand, let $\mathcal F_\lambda$ be the subset of $\MF_\sigma$ consisting of all measured foliations whose images under $d\mathbf w$ are scalar multiples of $d \mathbf w(G)$, \ie $\mathcal F_\lambda=\{F \in \MF_\sigma \mid d\mathbf {\mathbf w}(F) \in \reals_+ d\mathbf w(G)\}$.
We note that this coincides with the space of  tangent vectors which stretch $\lambda$ most.
%
%
%
%
%
%
%
Since the entire space $\MF_\sigma$ has dimension equal to $\dim \teich(S)$,  the subset $F_\lambda$ has  dimension $\dim\teich(S) -(\dim(M^*(\tau)-1)=6g-5-C(\lambda)$, and hence its intersection with the unit sphere in $T_x\teich(S)$ has  dimension equal to $6g-6-C(\lambda)$.
Since $F_\lambda $ consists of vectors infinitesimally stretching $\lambda$ most, $(d\mathbf w)\circ (dc_\sigma)(F_\lambda)$ is contained in $\mathcal F_\lambda$.

Let $v$ be an interior point of the face $F_\lambda$.
Let $(u_i)$ be a sequence on $\mathcal S_x$ converging to $v$, none of whose elements contained in $F_\lambda$.
Then $u_i$ is contained in a face $F_{\lambda_i}$ such that $(\lambda_i)$ converges to $\lambda$ in the Hausdorff topology by \cref{face Hausdorff}.
This implies that $\lambda_i$ is carried by $\tau$ for sufficiently large $i$.
Since $\lambda_i$ intersects $\lambda$ transversely, $u_i$ cannot stretch $\lambda$ most among the chain-recurrent geodesic laminations carried by $\tau$, and hence $(d\mathbf w)\circ (d c_\sigma)(u_i)$ does not lie in $\mathcal F_\lambda$.
It follows that the dimension of $F_\lambda$ in a small neighbourhood of  $v$ is the same as the dimension of $\mathcal F_\lambda$.
Thus, we have shown that  $F_\lambda$ has dimension equal to $6g-6-C(\lambda)$.
\end{proof}

\section{Invariance of combinatorial structure}
We know from \cref{face} that the set of faces of the unit sphere $\mathcal S_x$ in the tangent space at any point $x$ of $\teich(S)$ is in natural one-to-one correspondence with the set of chain-recurrent geodesic laminations on the surface. 
In this section, we compare unit spheres based on different points of $\teich(S)$.
To distinguish faces in $\mathcal S_x$ and those in $\mathcal S_y$, we use symbols such as $F_\lambda(x) \in \mathcal S_x$ and $F_\lambda(y) \in \mathcal S_y$ to denote the faces corresponding to $\lambda$ at $x$ and $y$.

We say that a map from $f \colon \mathcal S_x \to \mathcal S_y$ is a combinatorial isomorphism if (1) $f$ is a homeomorphism and (2) it takes a face of $\mathcal S_x$ to a face of $\mathcal S_y$.
We note that we do not require the map $f$ to be linear on faces.
\begin{theorem}
\label{combinatorially invariant}
For any $x, y \in \teich(S)$, there is a combinatorial isomorphism $f\colon \mathcal{S}_x \to \mathcal{S}_y$ taking $F_\lambda(x)$ to $F_\lambda(y)$.
\end{theorem}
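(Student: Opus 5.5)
The plan is to build $f$ inductively over the skeleta of $\mathcal S_x$, ordered by face dimension, using that the face structures of $\mathcal S_x$ and $\mathcal S_y$ are labelled by the same set $\chr(S)$ with the same dimensions. By \cref{face}, faces of both spheres are in bijection with chain-recurrent laminations via $\lambda\mapsto F_\lambda(x)$, $F_\lambda(y)$. By \cref{dimension}, $\dim F_\lambda(x)=6g-6-C(\lambda)=\dim F_\lambda(y)$. Inclusion of faces is reversed inclusion of laminations in both spheres: $F_\mu\subset F_\lambda$ iff every maximal chain-recurrent lamination containing $\mu$ contains $\lambda$, i.e.\ $\lambda\subset\mu$. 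This condition does not depend on the base point. By \cref{face Hausdorff}, the topology on the set of faces (Hausdorff topology) corresponds in both spheres to the Hausdorff topology on $\chr(S)$. Hence $\lambda\mapsto\lambda$ gives an isomorphism of face posets, compatible with dimension and with topology on each stratum.

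First I define $f$ on extreme points. The $0$-dimensional faces are those $F_\lambda$ with $C(\lambda)=6g-6$; by \cref{extreme} these include the stretch vectors. Every point of $\mathcal S_x$ lies in the relative interior of a unique face. Set $f(F_\lambda(x))=F_\lambda(y)$ on $0$-faces. This map is continuous on the $0$-skeleton by \cref{face Hausdorff} together with \cref{continuous stretch}.

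Next, assuming $f$ is defined on the union $\Sigma_{k-1}$ of faces of dimension $<k$ as a homeomorphism onto the corresponding union in $\mathcal S_y$ respecting labels, I extend it over $k$-faces. Each $k$-face $F_\lambda(x)$ is a compact convex $k$-ball whose relative boundary lies in $\Sigma_{k-1}$ and is mapped by $f$ homeomorphically onto $\partial F_\lambda(y)$. I extend by coning: choose a continuously varying interior point $c_\lambda(x)$, for instance the barycentre (which depends continuously on the face in the Hausdorff topology among faces of fixed dimension). Then map $c_\lambda(x)+t(p-c_\lambda(x))$ to $c_\lambda(y)+t(f(p)-c_\lambda(y))$ for $p\in\partial F_\lambda(x)$ and $t\in[0,1]$. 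This is a homeomorphism of balls, and distinct $k$-faces meet only along $\Sigma_{k-1}$, so the extension is well defined and bijective. After $6g-7$ steps $f$ is a bijection $\mathcal S_x\to\mathcal S_y$ carrying each $F_\lambda(x)$ onto $F_\lambda(y)$. Since the spheres are compact Hausdorff, continuity suffices to make it a homeomorphism, and the same construction from $y$ to $x$ gives continuity of the inverse.

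The main obstacle is continuity. The face structure is not locally finite, so a sequence of points $p_i$ may lie in interiors of distinct faces $F_{\lambda_i}(x)$, possibly of varying dimension, and converge to a point $p$ in some other face. I would handle this as follows. Pass to subsequences with constant dimension $k$ and with $\lambda_i\to\mu$, so that $F_{\lambda_i}\to F_\mu$ by \cref{face Hausdorff}. By \cref{n-dim limit}, $F_\mu$ has dimension $\le k$, and $p\in F_\mu$. If $\dim F_\mu=k$, then barycentres and boundary data converge, and by induction $f(p_i)\to f(p)$. If $\dim F_\mu<k$, the faces collapse, and one must check that the coning images also collapse to the right point. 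For this I would use the inductive hypothesis on $\partial F_{\lambda_i}$, which Hausdorff-converges onto $F_\mu$, together with the fact that coning moves points only within $F_{\lambda_i}(y)$, which Hausdorff-converges to $F_\mu(y)$. This pins down the limit only up to $F_\mu(y)$. To get the exact point, I would strengthen the induction hypothesis to uniform continuity, controlling the ratio parameter $t$ by distances to $\partial F_{\lambda_i}$. If that turns out too delicate, I would replace coning with a more canonical parametrisation, for example the barycentric expression via stretch vectors from \cref{face} combined with \cref{continuous stretch}.
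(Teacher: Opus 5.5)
Your scheme is the paper's: send $\v_\lambda(x)$ to $\v_\lambda(y)$, extend skeleton by skeleton by coning from a chosen interior point of each face, and get continuity from \cref{face Hausdorff}. However, the proposal stops exactly where the content of the proof lies. With the centres you chose, the construction can genuinely fail to be continuous.

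The barycentre is Hausdorff-continuous only among faces of a fixed dimension. For example, the triangles with vertices $(0,0)$, $(1,0)$, $(0,\varepsilon)$ converge to the segment $[(0,0),(1,0)]$, but their barycentres tend to $(1/3,0)$, not to the midpoint. The structure is not locally finite, so $k$-faces $F_{\lambda_i}(x)$ do collapse onto lower-dimensional faces $F_\mu(x)$. When they do, the cone points $c_{\lambda_i}(x)$ and $c_{\lambda_i}(y)$ converge to points of $F_\mu(x)$ and $F_\mu(y)$ that need not be the cone points of $F_\mu$ and need not correspond under $f$. Since $f|_{F_\mu(x)}$ is a cone map about $c_\mu(x)$ and is not affine, the limit of $f(v_i)$ need not be $f(w)$.

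Your two fallbacks do not close this. Strengthening the induction to uniform continuity does not help: on the compact skeleta it is automatic, and the defect is in the centres, not in the modulus of continuity. Nor is the parametrisation via \cref{face} canonical, since for faces of dimension $\geq 2$ the expression of a point as a convex combination of stretch vectors is not unique.

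The missing ingredient is the one the paper insists on when it chooses centres ``in such a way that limits of centres are always centres''. You need a choice $F\mapsto c(F)$ of a point in the relative interior of each face that is continuous for the Hausdorff topology on the set of \emph{all} faces, across changes of dimension. The Steiner point is one such choice: it is Hausdorff-continuous and lies in the relative interior. With this, the collapse case needs no estimate at all, as follows.
\begin{enumerate}[(1)]
\item Because $f|_{F_\mu(x)}$ is itself a cone map about $c_\mu(x)$, one has $f(c_\mu(x)+s(p-c_\mu(x)))=c_\mu(y)+s(f(p)-c_\mu(y))$ for every $p\in F_\mu(x)$ and $s\in[0,1]$, not only for $p\in\partial F_\mu(x)$.
\item Write $v_i=c_{\lambda_i}(x)+s_i(p_i-c_{\lambda_i}(x))$ with $p_i\in\partial F_{\lambda_i}(x)$, and pass to a subsequence with $p_i\to p$ and $s_i\to s$. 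Then $w=c_\mu(x)+s(p-c_\mu(x))$.
\item The point $p$ lies in the $(k-1)$-skeleton: in $\partial F_\mu(x)$ if $\dim F_\mu=k$, and in $F_\mu(x)$ itself otherwise. So the inductive hypothesis gives $f(p_i)\to f(p)$.
\item Together with $c_{\lambda_i}(y)\to c_\mu(y)$, this yields $f(v_i)\to c_\mu(y)+s(f(p)-c_\mu(y))=f(w)$.
\end{enumerate}
The identity in step (1) is also what is tacitly needed in the paper's own final step, where $b(v_i,c_i(x))$ need not converge to $b(w,c(x))$ once the dimension drops.
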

\begin{proof}
We construct $f$ step by step starting from points corresponding to stretch vectors.
Let $V_x$ be the set of stretch vectors on $\mathcal S_x$ and $V_y$ the one on $\mathcal S_y$.
For each stretch vector $\v_\lambda(x) \in V_x$, we define $f(\v_\lambda(x))$ to be $\v_\lambda(y) \in V_y$.
The map is obviously bijective, and by \cref{continuous stretch} it is continuous in both directions.
Since every $0$-dimensional face consists of one stretch vector,  this gives a homeomorphism between the $0$-skeletons of $\mathcal S_x$ and $\mathcal S_y$.

We next extend $f$ to the $1$-dimensional faces.
By \cref{extreme,dimension}, for each $1$-dimensional face $F$, there are stretch vectors $\v_{\lambda_1}, \v_{\lambda_2}$ which span $F$ in the sense that every $v \in F$ is expressed as $t\v_{\lambda_1}(x)+(1-t) \v_{\lambda_2}(x)$, and $\lambda_1$ and $\lambda_2$ share a chain-recurrent geodesic lamination whose complexity is $6g-7$.
We extend $f$ by setting $f(v)=t\v_{\lambda_1}(y)+(1-t) \v_{\lambda_2}(y)$.

We shall show that the map is continuous on the union of one-dimensional faces on $\mathcal S_x$.
Let $(F_{\lambda_i}(x))$ be a sequence of one-dimensional faces on $\mathcal S_x$, and 
suppose that $(v_i \in F_{\lambda_i}(x))$ converges to $w \in \mathcal S_x$.
Since $(\lambda_i)$ is a sequence of chain-recurrent laminations, passing to a subsequence, it converges to a chain-recurrent lamination $\mu$ in the Hausdorff topology.
Since $v_i$ is contained in $F_{\lambda_i}$, by \cref{limit face}, the limit $w$, which is also the limit of any subsequence of $(v_i)$,  is contained in $F_\mu$.
Furthermore, since $C(\lambda_i)=6g-7$ by \cref{dimension}, we see  by \cref{complexity semi-continuous} that $C(\mu)\geq 6g-7$.

By \cref{extreme}, the endpoints of $F_{\lambda_i}$ correspond to stretch vectors $\v_{\lambda_i^+}(x)$ and $\v_{\lambda_i^-}(x)$ along maximal chain-recurrent geodesic laminations $\lambda_i^+$ and $\lambda_i^-$ both containing $\lambda_i$.
By \cref{face}, for each $i$, there is $t_i \in [0,1]$ such that $v_i=t_i\v_{\lambda_i^+}(x) + (1-t_i)\v_{\lambda_i^-}(x)$.
Passing to a subsequence so that all of $(\lambda_i^+), (\lambda_i^-)$ and $(t_i)$ converge to $\mu^+, \mu^-$ and $t$ respectively, we have $w=t\v_{\mu^+}(x)+(1-t)\v_{\mu^-}(x)$, and both $\mu^+$ and $\mu^-$ are maximal and contain $\mu$.
(Note that it is possible that $\mu^+=\mu=\mu^-$.)
Since, by definition, we have $f(t_i\v_{\lambda_i^+}(x)+(1-t_i)\v_{\lambda_i^-}(x))=t_i\v_{\lambda_i^+}(y)+(1-t_i)\v_{\lambda^-_i}(y)$ and $f(w)=t\v_{\mu_+}(y)+(1-t)\v_{\mu_-}(y)$, we see that $(f(v_i))$ converges to $f(w)$.
Thus $f$ can be extended   as a homeomorphim from the $1$-skeleton of $\mathcal S_x$  to the $1$-skeleton of $\mathcal S_y$ by taking $F_\lambda(x)$ to $F_\lambda(y)$ linearly for each $\lambda$ with $C(\lambda)\geq 6g-7$.

From dimension $2$ on, we abandon the linearity, and extend $f$ to a homeomorphism inductively.
First, we shall choose \lq centres' for faces of $\mathcal S_x$ and $\mathcal S_y$.
In the case of a simplicial complex, the barycentres play this role.
Since our convex spheres have more complicated combinatorial structures, we need to choose centres rather arbitrarily in the interiors of faces in such a way that limits of centres are always centres.
This can be done as follows.

For a face $F$ of dimension $n$ in $\mathcal S_x$ or $\mathcal S_y$, towards which other faces accumulate in the Hausdorff topology, we fix some interior point $w \in F$, and consider a codimension-$n$ hyperplane $H$ (in $T_x \teich(S)$ or $T_y \teich(S)$) transversely intersecting $F$ at $w$.
For any face $G$ sufficiently near to $F$ in the Hausdorff topology, $G$ is transverse to $H$ and in particular $\Int G$ intersects $H$.
Then we can choose a centre $v$ on $G$ which converge to $w$ in such a way that if there is a continuous family of faces converging to $H$, then  the corresponding centres constitute a continuous curve converging to $w$ on $H$.

For $n \geq 2$, suppose that $f$ is extended to a homeomorphism between the $(n-1)$-skeletons taking $F_\lambda(x)$ to $F_\lambda(y)$ for each $\lambda$ with $C(\lambda) \geq 6g-6-(n-1)$.
Let $F_\lambda(x)$ be an $n$-dimensional face of $\mathcal S_x$ corresponding to $\lambda$.
Then $C(\lambda)=6g-6-n$ by \cref{dimension}, and hence $F_\lambda(y)$ is also an $n$-dimensional face.
Since $f$ is already defined on  the boundary $F_\lambda(x)$, which is homeomorphic to the $(n-1)$-dimensional sphere, as a homeomorphism, we can extend $f$ to a homeomorphism from $F_\lambda(x)$ to $F_\lambda(y)$ which takes the centre of $F_\lambda(x)$ to that of $F_\lambda(y)$, using the rays issuing at the centres.
Thus we have extended $f$ to the $n$-skeleton of $\mathcal S_x$, taking $F_x(\lambda)$ to $F_y(\lambda)$ for $\lambda$ with $C(\lambda)=6g-6-n$.

Let us prove the continuity of $f$ on the $n$-skeleton.
Consider a converging sequence $(v_i) \in \Int F_{\lambda_i}(x)$ such that $F_{\lambda_i}(x)$ is contained in the $n$-skeleton.
Then $C(\lambda_i) \geq 6g-6-n$.
Passing to a subsequence, $(\lambda_i)$ converges in the Hausdorff topology to a chain-recurrent geodesic lamination $\mu$, and we have, by \cref{complexity semi-continuous}, $C(\mu) \geq 6g-6-n$.
By \cref{face Hausdorff}, $F_{\lambda_i}(x)$ converges to $F_\mu(x)$ in the Hausdorff topology, and hence  $(v_i)$ converges to a point $w$ in $F_\mu(x)$.
Let $c_i(x)$ be the centre which we chose above for $F_{\lambda_i}(x)$ and $c(x)$ that of $F_\mu(x)$.
In the same way, in $\mathcal S_y$, we let $c_i(y)$ and $c(y)$ be centres of $F_{\lambda_i}(y)$ and $F_\mu(y)$ respectively.

For a point $v$ on a face $F$ with centre $c$, we denote by $b(v,c)$ the point on $\partial F$ on which the ray issued from $c$ to $v$ land, assuming that $v$ does not coincide with $c$.
Then $v$ can be expressed as $v=tc+(1-t)b(v,c)$ for some $t \in [0,1)$.
In the above construction, we defined $f$  in such a way  that $f(c_i(x))=c_i(y)$, $f(c(x))=c(y)$, $f(v_i)=f(t_ic_i(x)+(1-t_i) b(v_i, c_i(x)))=t_ic_i(y)+(1-t_i)f(b(v_i, c_i(x)))$, and $f(w)=f(tc(x)+(1-t)b(w,c))=tc(y)+(1-t)f(b(w,c))$.
Since $(\lambda_i)$ converges to $\mu$, we see that $(c_i(x))$ converges $c(x)$ whereas $(c_i(y))$ converges to $c(y)$.
Since $(v_i)$ converges to $w$, we also see that $(t_i)$ converges to $t$.
Thus we have $\lim_{i \to \infty} f(v_i)=f(w)$, and the continuity of $f$ holds.
Since the same is true by exchanging the roles of $x$ and $y$, we a see that $f^{-1}$ is also continuous, and hence $f$ is a homeomorphism.
By construction, $f$ takes an $n$-dimensional face of $\mathcal S_x$ to an $n$-dimensional face of $\mathcal S_y$.
Thus we have proved that $f$ is a combinatorial isomorphism.
\end{proof}

\section{Combinatorial automorphisms}
Let $h \colon S \to S$ be a diffeomorphism.
Then $h$ naturally induces a homeomorphism $h_* \colon \chr(S) \to \chr(S)$.
By \cref{face}, this gives rise to a permutation on  the set of faces of $\mathcal S_x$, which we denote also by $h_*$.
We shall now show that this permutation induces a combinatorial automorphism of $\mathcal S_x$, and that except in the case when the surface $S$ has genus 2, this gives a natural isomorphism between the extended mapping class group of the surface $S$ and the group of combinatorial automorphisms of $\mathcal S_x$.

\begin{lemma}
\label{induced auto}
The permutation of the faces of $\mathcal S_x$ given by  $h_* \colon \chr(S) \to \chr(S)$ induces a combinatorial automorphism of $\mathcal S_x$.
\end{lemma}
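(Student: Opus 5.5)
The plan is to reduce the statement to \cref{combinatorially invariant}, using naturality of Thurston's metric under the mapping class group. The diffeomorphism $h$ acts isometrically on $(\teich(S),\th)$ by pulling back hyperbolic metrics; write $h\colon \teich(S)\to\teich(S)$ for this action. Its differential $dh_x\colon T_x\teich(S)\to T_{h(x)}\teich(S)$ is then a linear isometry of the Thurston norms, so it restricts to a linear homeomorphism $\mathcal S_x\to\mathcal S_{h(x)}$. Being linear and bijective, it carries faces to faces, since the definition of a face involves only segments.

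The first step is to identify which face goes to which. The map $h$ takes the stretch ray $\alpha_\lambda$ from $x$ to the stretch ray from $h(x)$ along $h_*\lambda$. This holds because the stretch map is defined canonically from the hyperbolic structure and the lamination, for instance via the horocyclic foliation of the complementary ideal triangles, and this construction is natural under diffeomorphisms. It follows that $dh_x(\v_\lambda(x))=\v_{h_*\lambda}(h(x))$ for every maximal chain-recurrent $\lambda$. The description of $F_\lambda$ in \cref{face} uses only convex combinations of stretch vectors along maximal chain-recurrent laminations containing $\lambda$, and $h_*$ preserves inclusion and maximality. Together with linearity of $dh_x$, this gives $dh_x(F_\lambda(x))=F_{h_*\lambda}(h(x))$.

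Next, \cref{combinatorially invariant} provides a combinatorial isomorphism $g\colon\mathcal S_{h(x)}\to\mathcal S_x$ with $g(F_\mu(h(x)))=F_\mu(x)$ for every $\mu$. The composition $g\circ dh_x\colon\mathcal S_x\to\mathcal S_x$ is then a homeomorphism sending $F_\lambda(x)$ to $F_{h_*\lambda}(x)$. It is therefore a combinatorial automorphism realising the permutation $h_*$. Applying the same argument to $h^{-1}$ shows that the inverse permutation is also realised, so the permutation $h_*$ is a genuine combinatorial automorphism.

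The main obstacle is the naturality claim $dh_x(\v_\lambda(x))=\v_{h_*\lambda}(h(x))$. I would justify it by uniqueness of the stretch construction: the completion of $\lambda$ and the horocyclic foliation are determined by the metric and $\lambda$, and so they are transported by $h$. As an alternative, I would use that $h$ is an isometry of $\th$ carrying the maximally stretched lamination of a geodesic to its $h_*$-image, and then apply \cref{face} to identify faces directly, without tracking individual stretch vectors.
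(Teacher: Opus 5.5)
Your argument is correct, but it follows a different route from the paper. The paper does not leave the base point. It reruns the construction in the proof of \cref{combinatorially invariant} inside the single sphere $\mathcal S_x$, with the labelling $\lambda\mapsto h_*\lambda$ in place of $\lambda\mapsto\lambda$:
\begin{itemize}
\item on the $0$-skeleton it sends $\v_\lambda(x)$ to $\v_{h_*\lambda}(x)$, which is a homeomorphism by \cref{continuous stretch} and continuity of $h_*$ on $\chr(S)$;
\item it extends linearly over $1$-dimensional faces;
\item it cones from centres on higher faces, the centres being chosen consistently so that continuity survives.
\end{itemize}
You instead use the isometric, smooth action of the mapping class group on $(\teich(S),\th)$. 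This gives a canonical linear map $dh_x\colon \mathcal S_x\to\mathcal S_{h(x)}$ carrying $F_\lambda(x)$ onto $F_{h_*\lambda}(h(x))$, and you then use \cref{combinatorially invariant} as a black box to return to $x$.

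What your route buys is economy. All the delicate work (continuity, choice of centres) is delegated to the theorem already proved. The only new input is the naturality of stretch rays and of the Thurston norm under diffeomorphisms, which is standard, e.g.\ via Thurston's formula $\|v\|_\th=\sup_{[\mu]\in\pml(S)} d\log\ell_\mu(v)$ together with invariance of length functions under the action. The paper's route stays intrinsic to $\mathcal S_x$. It uses only properties of $h_*$ on $\chr(S)$: continuity in the Hausdorff topology, preservation of maximality and inclusion, and hence of complexity. It therefore needs neither differentiability of the action nor the transport of stretch rays between base points.

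Two small remarks.
\begin{itemize}
\item With the pull-back convention $x\mapsto h^*x$ that you state, you actually get $dh_x(\v_\lambda(x))=\v_{h^{-1}_*\lambda}(h^*x)$. You should either use the push-forward $(h^{-1})^*$ or apply your argument to $h^{-1}$. This is cosmetic.
\item Your final step with $h^{-1}$ is unnecessary. The map $g\circ dh_x$ is already a homeomorphism carrying each face $F_\lambda(x)$ onto $F_{h_*\lambda}(x)$, and $h_*$ is a bijection of $\chr(S)$. So its inverse automatically takes faces onto faces.
\end{itemize}
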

\begin{proof}
The proof is the same as that of \cref{combinatorially invariant}. We can choose centres for faces in a consistent way, and obtain a homeomorphism realising $h_*$. 
\end{proof}

Using this lemma, we prove that for any combinatorial automorphism of $\mathcal S_x$ there is an auto-diffeomorphism of $S$ inducing the same permutation on the set of faces.
We note that by \cref{combinatorially invariant}  the combinatorial type of $\mathcal S_x$ does not depend on the basepoint $x$. More precisely, we have the following.

We denote by $\Gamma^*(S)$ the extended mapping class group of $S$, that is, the group of homotopy classes of  homeomorphisms of $S$.
For any point $x$ in $S$, we denote by $\mathrm{Aut}(\mathcal{S}_x)$ the group of combinatorial automorphisms of $\mathcal{S}_x$.

\begin{theorem}
\label{combinatorial auto}
For each $x\in \mathcal{T}$, there is an epimorphism
$h_x:  \Gamma^*(S)\to \mathrm{Aut}(\mathcal{S}_x)
$ such that any element of $\Gamma^*(S)$ and its image by $h_x$ induce the same permutation of the set of faces of $\mathcal{S}_x$. Furthermore, each such epimorphism $h_x$ is an isomorphism except in the case when the genus of the surface $S$ is 2, in which case $h_x$  has kernel isomorphic to $\mathbb{Z}_2$ generated by the hyperelliptic involution of $S$.

\end{theorem}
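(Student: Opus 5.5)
The plan is to build $h_x$ from \cref{induced auto}, prove surjectivity by reducing to Ivanov's theorem on automorphisms of the curve complex, and compute the kernel directly. For $[h]\in\Gamma^*(S)$, \cref{induced auto} gives a combinatorial automorphism realising the permutation $h_*$ of faces. Since these realisations are built by non-canonical extensions over centres, I will regard $\mathrm{Aut}(\mathcal S_x)$ as the group of automorphisms modulo those inducing the identity permutation of faces; that is, an automorphism is identified with its face permutation, as the statement implicitly does. Isotopic diffeomorphisms induce the same map on $\chr(S)$, so $h_x([h])$ depends only on the class, and $(hk)_*=h_*k_*$ makes $h_x$ a homomorphism.

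For surjectivity, let $\Phi\in \mathrm{Aut}(\mathcal S_x)$. A combinatorial automorphism is a homeomorphism sending faces to faces, so it preserves inclusion of faces and their dimensions; the latter holds because dimension equals the length of maximal chains of faces below, or by invariance of domain. By \cref{dimension}, the faces of maximal dimension $6g-7$ are exactly the $F_\lambda$ with $C(\lambda)=1$, and by Thurston's description recalled before \cref{dimension} these correspond to single simple closed geodesics. I would also check directly that $C(\lambda)=1$ forces $\lambda$ to be a simple closed curve, using \cref{dim tangential}. Two curves $\alpha,\beta$ are disjoint iff $\alpha\cup\beta$ is a chain-recurrent lamination, and, by \cref{face}, iff $F_{\alpha\cup\beta}$ is a common face of $F_\alpha$ and $F_\beta$: the faces containing $F_{\alpha}\cap F_\beta$ correspond to sublaminations, and $F_\alpha\cap F_\beta$ is nonempty iff some maximal chain-recurrent lamination contains both curves. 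The condition ``$F_\alpha\cap F_\beta\neq\emptyset$'' is therefore combinatorial, so $\Phi$ induces an automorphism of the curve complex $C(S)$. Ivanov's theorem \cite{Iv}, together with Korkmaz and Luo for genus $2$, gives $h$ with $h_*=\Phi$ on curves.

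It remains to show that $h_*^{-1}\Phi$, which fixes every facet, fixes every face. A face $F_\lambda$ is the intersection of the facets containing it only if $\lambda$ is determined by the set of simple closed curves $\gamma$ with $F_\lambda\subset F_\gamma$, i.e.\ the curves contained in $\lambda$. This fails when $\lambda$ has non-closed leaves. I would instead use density: by the definition of chain recurrence, every $\lambda$ is a Hausdorff limit of multicurves $m_i$. A multicurve face $F_{m}$ is the intersection of the facets $F_\gamma$ for the components $\gamma\subset m$, since the maximal laminations containing $m$ are exactly those containing each $\gamma$. So $\Phi'=h_*^{-1}\Phi$ fixes every multicurve face. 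By \cref{face Hausdorff}, $F_{m_i}\to F_\lambda$, and $\Phi'$ is a homeomorphism, so it preserves Hausdorff limits and sends faces to faces. Hence $\Phi'(F_\lambda)=\lim \Phi'(F_{m_i})=\lim F_{m_i}=F_\lambda$, which proves surjectivity.

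For the kernel, suppose $h_*$ fixes every face. Then $h$ fixes the isotopy class of every simple closed curve, and it is classical that the only such nontrivial mapping classes are the hyperelliptic involution in genus $2$; in higher genus the action on $C(S)$ is faithful by Ivanov. Conversely, in genus $2$ the hyperelliptic involution $\iota$ fixes every simple closed geodesic, hence acts trivially on $\chr(S)$ as a Hausdorff-closure of the set of multicurves, and therefore lies in the kernel. The main obstacle I expect is making the face-level statements rigorous: that combinatorial automorphisms preserve face inclusion and dimension, and that $F_\alpha\cap F_\beta\neq\emptyset$ exactly characterises disjointness of curves. The latter needs the fact that a chain-recurrent lamination containing a union of disjoint curves extends to a maximal chain-recurrent lamination, which was used as \cite[p.\ 50]{ThM} in the proof of \cref{dimension}.
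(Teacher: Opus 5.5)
Your proposal is correct and follows the paper's route: facets correspond to simple closed curves via \cref{dimension}, disjointness of curves is read off from intersecting facets, Ivanov's theorem supplies the mapping class, and density of multicurves in $\chr(S)$ together with \cref{face Hausdorff} extends the agreement to all faces. Several of your steps make precise what the paper leaves implicit: identifying elements of $\mathrm{Aut}(\mathcal S_x)$ with their face permutations, realising multicurve faces as intersections of facets (needed because Hausdorff limits of single curves are connected), and computing the kernel explicitly. For preservation of dimension, rely on invariance of domain rather than on the length of chains of subfaces, since chain length does not detect dimension for general convex faces.
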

\begin{proof} 
By \cref{dimension}, there is a one-to-one correspondence between the set of  simple closed geodesics of $S$ and the set of $(6g-7)$-dimensional faces of $\mathcal S_x$.
Two simple closed geodesics $a, b$ are disjoint if and only if the corresponding faces $F_a$ and $F_b$ intersect.
Therefore, any combinatorial automorphism $f$ of $ \mathcal S_x$ induces a simplicial isomorphism of the curve complex of $S$. Ivanov's theorem \cite{Iv} says that such a simplicial isomorphism is induced by a diffeomorphism $h: S\to S$ and that the resulting epimorphism
from $ \Gamma(S)$ to  the group of simplicial automorphisms of the complex of curves 
 is an isomorphism except in the case of a surface of genus 2, in which case it has kernel is the group of order two generated by the hyperelliptic involution. 
Now the faces of $\mathcal S_x$ correspond one-to-one to the set of chain-recurrent geodesic laminations $\chr(S)$, and by \cref{face Hausdorff}, Hausdorff convergence in $\chr(S)$ corresponds to  Hausdorff convergence of the faces.
Since every chain-recurrent geodesic lamination is a Hausdorff limit of simple closed geodesics, $f$ and $h$, which induce the same permutation of the set of simple closed geodesics, induce the same homeomorphism of $\chr(S)$.
This in turn shows that the combinatorial automorphism induced by $h_*$ coincides with $f$ as a permutation on the set of faces of $\mathcal S_x$.
\end{proof}

\section{Cotangent space and codimension}
In this last section, we shall give an answer to a part of \cite[Question 7.12]{HOP}. The question is the following: what are the dimensions, codimensions, face-dimensions
and adherence-dimensions of arbitrary faces of the unit spheres in the tangent or cotangent spaces at some point of Teichm\"uller space equipped with  Thurston's  Finsler structure? 
Our \cref{dimension} gives an answer for the dimensions of the faces in the unit tangent spheres.
In the present section we solve the problem of codimension for the faces in unit tangent spheres and the exposed faces in the unit cotangent spheres. We also give a necessary and sufficient condition for a face of a unit cotangent sphere to be exposed.

We first turn to the unit spheres of the cotangent spaces.
Let $\ell \colon \ml(S) \times \teich(S) \to \reals_+$ be the length function, which takes a pair $(\lambda, x)$ to the length of the measured lamination $\lambda$ with respect to the hyperbolic structure $x$.
We can consider its differential with respect to the second coordinate, obtaining  $d\ell_x \colon \ml(S) \to T^*_x \teich(S)$.
Considering $\log \ell$ instead of $\ell$, the differential descends to $d \log \ell_x \colon \pml(S) \to T^*_x \teich(S)$.
Thurston proved in \cite{ThM} that the map $d\log\ell_x$ is an embedding of $\pml(S)$ whose image is  the unit cotangent sphere with respect to the norm $\Vert \cdot \Vert_\th^*$ dual to $\Vert \cdot \Vert_\th$, which we denote by $\mathcal S^*_x$.

In \cite{HOP}, the cotangent spaces $T^*\teich(S)$ were used to prove the infinitesimal mapping class group rigidity of Thurston's metric.
It was also proved there that for any point $x$ in Teichm\"uller space and for any  projective measured lamination $[\lambda]$, there is an associated exposed face in the cotangent space at $x$ defined by $\{d\log\ell_x([\mu])\mid |\mu| \subset |\lambda|\}$.
We denote this exposed face by $F^*_{|\lambda|}$.
In the same paper,  the notion of codimension of faces was introduced:

\begin{definition}
\label{codimension}
Given a face $F^*$ of $\mathcal S_x^*$, we define its codimension, denoted by $\codim F$, to be the dimension of the set of vectors $v$ in $\mathcal S_x$ such that $w^*(v)=1$ for all $w^* \in F^*$.
In the same way, we define the codimension of a face of $\mathcal S_x$ by exchanging the roles of vectors and covectors.
We say that $v$ is {\em orthogonal} to $F^*$ if $w^*(v)=1$ for all $w^* \in F$.
We also use this term for covectors in the same way.
\end{definition}

We now give a description of the codimension of a face corresponding to the support of a projective measured lamination.

\begin{theorem}
\label{codimension of face}
Let $[\lambda]$ be a projective lamination on $S$. 
Then $v \in \mathcal S_x$ is orthogonal to $F^*_{|\lambda|}$ if and only if $v$ is contained in the face $F_{|\lambda|}$ of $\mathcal S_x$.
In particular, we have $\codim(F^*_{|\lambda|})=6g-6-C(|\lambda|)$.
\end{theorem}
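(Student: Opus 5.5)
The plan is to use the standard description of covectors in $\mathcal S^*_x$ as derivatives of log-length, together with Thurston's characterisation of the norm $\Vert\cdot\Vert_\th$: for $v\in T_x\teich(S)$ one has $\Vert v\Vert_\th=\sup_{[\mu]\in\pml(S)} d\log\ell_x([\mu])(v)$. In particular, for a unit vector $v\in\mathcal S_x$ and every $[\mu]$, we have $d\log\ell_x([\mu])(v)\le 1$, with equality exactly when $\mu$ is infinitesimally maximally stretched by $v$.

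Suppose first that $v\in F_{|\lambda|}$. By \cref{face}, $v=\sum_j\alpha_j\v_{\lambda_j}(x)$ with each $\lambda_j$ maximal chain-recurrent and containing $|\lambda|$. A stretch vector $\v_{\lambda_j}(x)$ multiplies the length of every measured lamination supported in $\lambda_j$ by $e^t$. Hence $d\log\ell_x([\mu])(\v_{\lambda_j}(x))=1$ for every $\mu$ with $|\mu|\subset|\lambda|\subset\lambda_j$. Linearity in $v$ and $\sum\alpha_j=1$ then give $w^*(v)=1$ for all $w^*\in F^*_{|\lambda|}$, so $v$ is orthogonal to the face.

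Conversely, suppose $v\in\mathcal S_x$ satisfies $d\log\ell_x([\mu])(v)=1$ for all $\mu$ with $|\mu|\subset|\lambda|$. Then every such $\mu$, in particular $\lambda$ itself with full support, is infinitesimally maximally stretched by $v$. Now $v$ lies in some face $F_\nu$, namely the smallest face containing it, and $\nu$ is the maximally stretched chain-recurrent lamination of $v$ in the sense of \cite{BOP}. The key step is to show $|\lambda|\subset\nu$; then $F_\nu\subset F_{|\lambda|}$, because every maximal chain-recurrent lamination containing $\nu$ contains $|\lambda|$, and so $v\in F_{|\lambda|}$. To get $|\lambda|\subset\nu$, I would invoke Thurston's result \cite[Theorem 8.2]{ThM} that the measured laminations realising the ratio maximum for a Lipschitz-optimal deformation in direction $v$ have support contained in the maximal ratio-maximising chain-recurrent lamination. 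Its infinitesimal version identifies $\nu$ as the union of all maximally stretched laminations, as in \cite{BOP}. This identification is the step I expect to be the main obstacle: one must check that the infinitesimal maximal stretch locus of $v$ coincides with the lamination $\nu$ labelling the face containing $v$. I would argue by contradiction. If $|\lambda|\not\subset\nu$, then some maximal $\lambda'\supset\nu$ crosses $|\lambda|$, and a vector in the relative interior of $F_\nu$ would fail to stretch $\lambda$ maximally. This contradicts the fact that equality $d\log\ell_x([\lambda])(v)=1$ is a face condition, i.e.\ it persists on the minimal face containing $v$, since the function $u\mapsto d\log\ell_x([\lambda])(u)$ is a supporting linear functional equal to $1$ on the face.

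With the equivalence in hand, the set of vectors orthogonal to $F^*_{|\lambda|}$ is exactly $F_{|\lambda|}$. By \cref{codimension} its dimension is $\codim(F^*_{|\lambda|})$, and by \cref{dimension} this equals $6g-6-C(|\lambda|)$. Here $|\lambda|$ is chain-recurrent, being the support of a measured lamination.
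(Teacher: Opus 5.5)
Your proof is essentially the paper's. The \lq if' direction is identical. In the \lq only if' direction, your contradiction argument runs as follows: $d\log\ell_x([\lambda])$ is at most $1$ on $\mathcal S_x$, so equality at the relative-interior point $v$ of $F_\nu$ forces equality at the stretch vector $\v_{\lambda'}(x)\in F_\nu$ for a maximal $\lambda'\supset\nu$ crossing $|\lambda|$. This is a repackaging of the paper's observation that $v=\sum_j\alpha_j\v_{\nu_j}(x)$ with all $\alpha_j>0$ and some $\nu_1$ crossing $|\lambda|$ gives $d\log\ell_x([\lambda])(v)<1$.

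The one input you leave implicit is the strict inequality $d\log\ell_x([\lambda])(\v_{\lambda'}(x))<1$ for a maximal chain-recurrent $\lambda'$ crossing $|\lambda|$. The paper takes exactly this from \cite[Lemma 2.7]{BOP}, so the identification via Thurston's Theorem 8.2 that you flag as the main obstacle is not needed.
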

\begin{proof}
Once we show the first statement, the second statement directly follows from \cref{dimension}.
We shall first prove the \lq if' part of the first statement.
Suppose that $v$ is contained in $F_{|\lambda|}$.
Then $v$ infinitesimally stretches $|\lambda|$ most, and hence it does so on any measured lamination $\mu$ whose support is contained in $|\lambda|$.
Therefore,  $v(\ell_x(\mu))=\ell_x(\mu)$  for any $\mu$ with $|\mu|\subset|\lambda|$, which implies that $d\log\ell_x(\mu)(v)=1$, and hence $v$ is orthogonal to $F^*_{|\lambda|}$.

We shall next show the \lq only if' part by proving its contraposition.
Suppose that $v$ is not contained in $F_{|\lambda|}$.
By \cref{face}, it is contained in the interior of some face $F_\nu$ for a chain recurrent geodesic lamination $\nu$ not containing $|\lambda|$, and $v$ is expressed as $v=\alpha_1 \v_{\nu_1}+ \dots + \alpha_k \v_{\nu_k}$ with $\alpha_1, \dots, \alpha_k >0$ and $\alpha_1+ \dots + \alpha_k=1$ such that $\nu$ is the largest common chain recurrent geodesic lamination of the maximal chain-recurrent geodesic laminations $\nu_1, \dots , \nu_k$.
Then one of $\nu_1, \dots , \nu_k$, say $\nu_1$,  intersects $|\lambda|$ transversely.
Then we have $\v_{\nu_1}(\lambda)< \ell_x(\lambda)$ by \cite[Lemma 2.7]{BOP}, which implies that $v(\lambda) < \ell_x(\lambda)$.
Therefore, $v$ is not orthogonal to $F^*_{|\lambda|}$.
\end{proof}

As remarked in \cite{HOP}, there are faces which do not have the form $F_{|\lambda|}$.
Still we can show the following.

\begin{theorem}
\label{boundary face}
Let $F^*$ be a face of $\mathcal S^*_x$.
Then there is a projective lamination $[\lambda]$ such that either $F^*=F^*_{|\lambda|}$, or $F^*$ is a proper subface of $F^*_{|\lambda|}$ and the interior of $F^*$ consists of the images under $d\log\ell_x$ of projective measured laminations whose supports are equal to $|\lambda|$.
\end{theorem}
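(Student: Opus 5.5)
Since $d\log\ell_x$ is a homeomorphism from $\pml(S)$ onto $\mathcal S^*_x$, the face $F^*$ is the image of a closed set of projective measured laminations. The plan is to pick $[\lambda]$ whose image is an interior point of $F^*$ and to show that $F^*$ lies in $F^*_{|\lambda|}$. The dichotomy in the statement should then come out of an analysis of the interior of $F^*$.

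\emph{Step 1: $F^* \subset F^*_{|\lambda|}$.} Choose an interior point $p=d\log\ell_x([\lambda])$ of $F^*$, and let $[\mu]$ be any point of $F^*$. Because $p$ is interior and $F^*$ is convex, the segment from $d\log\ell_x([\mu])$ to $p$ can be extended slightly past $p$ inside $F^*$. So $p=s\,d\log\ell_x([\mu])+(1-s)\,d\log\ell_x([\mu'])$ for some $[\mu']$ with $d\log\ell_x([\mu'])\in F^*$ and some $s\in(0,1)$.

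The key input is the following claim: if a convex combination of $d\log\ell_x([\mu])$ and $d\log\ell_x([\mu'])$ lies on $\mathcal S^*_x$, then $\mu\cup\mu'$ is a geodesic lamination, i.e.\ $i(\mu,\mu')=0$. To prove it, take $v$ in the face $F_\nu$ of $\mathcal S_x$ dual to $p$. Then $v$ satisfies $p(v)=1$ and $\|v\|_\th=1$. Since $d\log\ell_x([\mu])(v)\le 1$ and $d\log\ell_x([\mu'])(v)\le 1$ while their convex combination equals $1$, both values equal $1$. Hence $v$ maximally stretches both $\mu$ and $\mu'$, so $|\mu|$ and $|\mu'|$ are both contained in the maximally stretched chain-recurrent lamination of $v$ (this uses \cite[Lemma 2.7]{BOP}, as in the proof of \cref{codimension of face}). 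The tangent-space point of view also shows that the measured-lamination structure of $p$ comes from $\mu+c\mu'$ for a suitable weight $c>0$. Injectivity of $d\log\ell_x$ then gives $[\lambda]=[\mu+c\mu']$. It follows that $|\mu|\subset|\lambda|$, so $F^*\subset F^*_{|\lambda|}$.

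\emph{Step 2: the dichotomy.} If $F^*=F^*_{|\lambda|}$ we are in the first case. Otherwise $F^*$ is a proper face of $F^*_{|\lambda|}$. Applying Step 1 to each interior point $d\log\ell_x([\lambda'])$ of $F^*$ gives $|\lambda'|\supset|\mu|$ for every $[\mu]\in F^*$, and in particular $|\lambda'|\supset|\lambda|$. Since also $|\lambda'|\subset|\lambda|$, every interior point of $F^*$ has support exactly $|\lambda|$, which is the second case.

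The main obstacle is making rigorous that $d\log\ell_x$ sends weighted sums with common support to the corresponding convex combinations, up to reparametrisation. The reason is that $d\log\ell_x(\mu+c\mu')$ is a convex combination of $d\log\ell_x(\mu)$ and $d\log\ell_x(\mu')$ with weights proportional to $\ell_x(\mu)$ and $c\,\ell_x(\mu')$, since $d\ell_x$ is additive on measured laminations with disjoint supports. To avoid relying on this, I would first try to use the linearity of $d\ell_x$ on measures supported in a common lamination, which holds because such measures form a cone on which the length function is linear. From this it follows that the image of the set $\{[\mu] : |\mu|\subset|\lambda|\}$ is a convex polytope, namely the convex hull of the images of the ergodic measures. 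I would then read off the faces of this polytope directly as the sets of measures supported on a union of ergodic components. Faces of $F^*_{|\lambda|}$ are faces of $\mathcal S^*_x$, so this handles the proper-subface case cleanly.
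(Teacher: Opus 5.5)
Your argument is correct. Its skeleton is the same as the paper's: pick $[\lambda]$ with $d\log\ell_x([\lambda])$ in the relative interior of $F^*$, prove $F^*\subset F^*_{|\lambda|}$, then swap the roles of two interior points. The difference lies in how you obtain the inclusion $F^*\subset F^*_{|\lambda|}$.

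The paper gets it in one line. $F^*_{|\lambda|}$ is already known to be a face of $\mathcal S^*_x$ by \cite[Theorem 6.7]{HOP}, and a face containing a relative interior point of a convex subset of the ball contains that whole subset. You instead derive it directly. You use a unit vector $v$ with $p(v)=1$, the face description of $\mathcal S_x$ and \cite[Lemma 2.7]{BOP} to get $i(\mu,\mu')=0$. Then linearity of length on measures carried by $|\mu|\cup|\mu'|$, together with injectivity of $d\log\ell_x$, identifies $[\lambda]=[\mu+c\mu']$ with $c=(1-s)\ell_x(\mu)/(s\,\ell_x(\mu'))$.

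Your route is longer and leans on the tangent-side theory. In exchange, it is self-contained and in effect reproves the part of the face property of $F^*_{|\lambda|}$ that is needed. As you note, it also exhibits $F^*_{|\lambda|}$ as an injective affine image of the simplex $PM(|\lambda|)$, which describes all its subfaces explicitly.

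Three small corrections to that final aside:
\begin{enumerate}
\item State additivity for measures carried by a common lamination rather than for ``disjoint supports'', since $\mu$ and $\mu'$ may share components.
\item Subfaces of the simplex correspond to subsets of ergodic measures, not to supports. Distinct ergodic measures on a non-uniquely ergodic component have the same support.
\item What you actually need is the other direction from the one you state: a face of $\mathcal S^*_x$ lying in $F^*_{|\lambda|}$ is a face of $F^*_{|\lambda|}$. This is immediate.
\end{enumerate}
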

\begin{proof}
Let $F^*$ be a face of $\mathcal S^*_x$, and let $P$ be an interior point of $F^*$.
Let $[\lambda]$ be the projective measured lamination such that $P=d\log\ell_x([\lambda])$.
Then by definition, $P$ is contained in $F^*_{|\lambda|}$, and since $P$ is an interior point of $F$, the entire $F$ must be contained in $F^*_{|\lambda|}$.

If $F^*=F^*_{|\lambda|}$, then we are done.
Otherwise, it remains to show that for every interior point $P'=d\log\ell_x([\mu])$ of $F^*$, we have $|\mu|=|\lambda|$.
Since $F^*$ is contained in $F^*_{|\lambda|}$ which consists of the images of projective measured laminations whose supports are contained in $|\lambda|$, we have $|\mu| \subset |\lambda|$.
As shown above, since $P'$ is an interior point, $F^*$ is contained in $F^*_{|\mu|}$.
By applying the same argument by exchanging the roles of $P$ and $P'$, we have $|\lambda| \subset |\mu|$.
Thus we see that $|\lambda|=|\mu|$, and we have completed the proof.
\end{proof}

By \cref{boundary face}, for any face $F^*$ of $\mathcal S_x^*$, there is a unique (unmeasured) geodesic lamination $\nu$ such that any interior point of $F^*$ is the image $d\log\ell_x([\lambda])$  for some projective measured lamination $[\lambda]$ with $|\lambda|=\nu$.
We call such a geodesic lamination $\nu$ the {\em interior lamination}  of $F^*$.
Now, we can refine \cref{codimension of face} to the following corollary without changing the proof.

\begin{corollary}
\label{refined codimension}
Let $F$ be a face whose interior lamination is $\nu$.
Then we have $\codim(F)=6g-6-C(\nu).$
\end{corollary}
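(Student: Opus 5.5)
The plan is to reduce the corollary to \cref{codimension of face} by showing that the set of vectors orthogonal to $F$ coincides with the set of vectors orthogonal to $F^*_{\nu}$. Once this is done, the first statement of \cref{codimension of face} identifies that set with the face $F_\nu$ of $\mathcal S_x$. Then \cref{dimension} gives $\codim(F)=\dim F_\nu=6g-6-C(\nu)$. Here $F^*_\nu$ means $F^*_{|\lambda|}$ for any $[\lambda]$ with $|\lambda|=\nu$, and $\nu$ is the support of a measured lamination, hence chain-recurrent, so $F_\nu$ is defined.

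The inclusion $F\subset F^*_\nu$ is given by \cref{boundary face}. Hence every $v$ orthogonal to $F^*_\nu$ is orthogonal to $F$. Together with the ``if'' direction of \cref{codimension of face}, this says that $F_\nu$ is contained in the orthogonal set of $F$.

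For the converse, suppose $v\in\mathcal S_x$ is orthogonal to $F$. Choose an interior point $P=d\log\ell_x([\lambda])$ of $F$ with $|\lambda|=\nu$; it exists by the definition of the interior lamination. Then $v(\ell_x(\lambda))=\ell_x(\lambda)$. Now run the contrapositive argument of \cref{codimension of face} verbatim. If $v\notin F_\nu$, then $v$ lies in the interior of some $F_{\nu'}$ with $\nu'\not\supset\nu$. Writing $v$ as a convex combination of stretch vectors, some $\v_{\nu_1}$ has $\nu_1$ transverse to $|\lambda|=\nu$. By \cite[Lemma 2.7]{BOP} this gives $\v_{\nu_1}(\lambda)<\ell_x(\lambda)$. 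Every other term satisfies $\v_{\nu_j}(\lambda)\le\ell_x(\lambda)$, since stretch vectors have unit norm and the dual norm is the maximal stretch. Therefore $v(\lambda)<\ell_x(\lambda)$, a contradiction. So the orthogonal set of $F$ is exactly $F_\nu$.

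The key point, and the only real obstacle, is that this argument uses only a single measured lamination whose support is all of $\nu$, not the whole face $F^*_\nu$. Such a lamination is supplied exactly by an interior point of $F$, which is why the proof of \cref{codimension of face} goes through unchanged.
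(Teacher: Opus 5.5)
Your proposal is correct and follows the paper's route. The paper gets this corollary by rerunning the proof of \cref{codimension of face} unchanged, and you make that rerun explicit. For the \lq if' direction you use $F\subset F^*_\nu$ from \cref{boundary face}. For the \lq only if' direction you use a single interior point $d\log\ell_x([\lambda])$ with $|\lambda|=\nu$. You then conclude with \cref{dimension}. Your remark that the other stretch vectors satisfy $\v_{\nu_j}(\lambda)\le\ell_x(\lambda)$, because they have unit Thurston norm, fills in a step the paper leaves implicit.
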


As was shown in \cite[Theorem 6.7]{HOP}, any interior point of a face $F^*_{|\lambda|}$ is the image of a projective measured lamination whose support is equal to $|\lambda|$.
On the other hand, we also remarked in the same paper that the opposite implication fails to hold.
In the following proposition, we characterise boundary points of  $F^*_{|\lambda|}$ which are the images of a projective laminations with the same supports as $|\lambda|$.
For the support $|\lambda|$ of a measured lamination $\lambda$, we denote by $M(|\lambda|)$ the space of the transverse measure on $|\lambda|$, and $PM(|\lambda|)$ its projectivisation.

\begin{proposition}
Let $P$ be a point on the boundary of the face $F^*_{|\lambda|}$ corresponding to a projective measured lamination $[\lambda]$.
Suppose that $P=d\log\ell_x([\mu])$ for a projective measured lamination $[\mu]$ with  $|\mu|=|\lambda|$.
Then there is a component $\lambda_0$ of $\lambda$ which is not uniquely ergodic, and the corresponding component $\mu_0$ of $\mu$ has a transverse measure lying on the boundary of $PM(|\mu_0|)$.
\end{proposition}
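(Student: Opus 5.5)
We argue by contraposition: assume every component $\mu_0$ of $\mu$ either has a uniquely ergodic support, or carries a transverse measure in the relative interior of $PM(|\mu_0|)$, and show that $P=d\log\ell_x([\mu])$ is an interior point of $F^*_{|\lambda|}$.

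First, we decompose $|\lambda|=\lambda_1\sqcup\dots\sqcup\lambda_m$ into minimal components. The transverse measures on $|\lambda|$ then split as $M(|\lambda|)=M(\lambda_1)\times\dots\times M(\lambda_m)$, and each $M(\lambda_j)$ is a closed convex cone spanned by the finitely many ergodic measures on $\lambda_j$. Since $\ell_x$ is linear on measured laminations with support in $|\lambda|$, the map $\mu\mapsto d\ell_x(\mu)$ is linear on $M(|\lambda|)$. Hence $F^*_{|\lambda|}=d\log\ell_x(PM(|\lambda|))$ is the image of the convex polytope obtained by normalising $M(|\lambda|)$ by the length function $\ell_x$. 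Indeed, $d\log\ell_x(\mu)=d\ell_x(\mu)/\ell_x(\mu)$, and on the slice $\{\ell_x=1\}$ this is a linear map restricted to a polytope.

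The key step is to show that this linear map is injective on the slice. Injectivity follows from Thurston's theorem that $d\log\ell_x$ is an embedding of $\pml(S)$, combined with linearity: if two points of the slice had the same image, then the segment joining them would be mapped to a single point, and this contradicts injectivity on $\pml(S)$. So $d\log\ell_x$ restricts to an affine isomorphism from the slice polytope $\Delta=\{\mu\in M(|\lambda|)\mid \ell_x(\mu)=1\}$ onto $F^*_{|\lambda|}$. In particular, boundary points of $F^*_{|\lambda|}$ correspond exactly to boundary points of $\Delta$.

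It remains to identify $\partial\Delta$. The polytope $\Delta$ is a slice of the product cone $\prod_j M(\lambda_j)$. A point of such a slice lies in the relative interior exactly when every component $\mu_j$ lies in the relative interior of its cone $M(\lambda_j)$, that is, when every $[\mu_j]$ lies in the interior of $PM(\lambda_j)$. The condition $|\mu|=|\lambda|$ guarantees that every $\mu_j$ is nonzero. A cone $M(\lambda_j)$ that is a single ray (the uniquely ergodic case) has relative interior equal to the ray minus the origin, so such a component never contributes a boundary point. Therefore, if $P\in\partial F^*_{|\lambda|}$, some component $\mu_0$ must have $[\mu_0]\in\partial PM(|\mu_0|)$, and $PM(|\mu_0|)$ must have nonempty boundary. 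This forces $\dim PM(|\mu_0|)\ge 1$, i.e.\ $\lambda_0$ is not uniquely ergodic, which is the statement.

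The main obstacle is the injectivity and affineness step, which is needed to identify the face structure of $F^*_{|\lambda|}$ with that of $\Delta$. One must check that normalising by $\ell_x$ and applying $d\ell_x$ gives an affine bijection rather than merely a continuous one. One must also use that $F^*_{|\lambda|}$ equals the whole image, so that its relative boundary as a convex set matches $\partial\Delta$ under the homeomorphism. Here we rely on the invariance of domain for the affine map between equal-dimensional convex sets.
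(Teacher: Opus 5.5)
Your proof is correct and follows the paper's argument. Like the paper, you identify $F^*_{|\lambda|}$ with $PM(|\lambda|)$ through the restriction of $d\log\ell_x$, so that boundary points correspond to boundary points. You then observe that a boundary point of $PM(|\lambda|)$ with full support $|\lambda|$ must have some non-uniquely ergodic component lying on the boundary of its simplex of projective measures.

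The only difference is in how the boundary correspondence is justified. The paper uses just the homeomorphism, implicitly relying on invariance of domain. You use linearity of $\ell_x$ on $M(|\lambda|)$ to upgrade it to an affine isomorphism from the simplex $\Delta$ onto $F^*_{|\lambda|}$, which makes the correspondence immediate, so your closing appeal to invariance of domain is not needed.
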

\begin{proof}
By definition, the face $F^*_{|\lambda|}$ consists of the images of projective measured laminations whose supports are contained in $|\lambda|$.
Since $d\log\ell_x$ is a homeomorphism onto $\mathcal S_x^*$, its restriction to $PM(|\lambda|)$ gives a homeomorphism onto $F^*_{|\lambda|}$.
Any boundary point of $PM(|\lambda|)$ either has measure $0$ on a component or corresponds to a projective transverse measure on a non-uniquely ergodic component.
Thus we have completed the proof.
\end{proof}

In \cite[Theorem 6.7]{HOP}, it was proved that $F^*_{|\lambda|}$ is an exposed face.
We shall next prove that no other faces are exposed.

\begin{theorem}
\label{exposed}
A face of $\mathcal S_x^*$ is exposed if and only it is equal to $F^*_{|\lambda|}$ for some projective lamination $[\lambda]$.
\end{theorem}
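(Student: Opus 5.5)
The ``if'' direction is \cite[Theorem 6.7]{HOP}, so only the ``only if'' direction needs an argument. Let $F^*$ be an exposed face of $\mathcal S^*_x$, and choose a hyperplane $H\subset T^*_x\teich(S)$ with $F^*=H\cap\mathcal S^*_x$. Since the unit cotangent ball is the dual ball of $\Vert\cdot\Vert_\th$, any supporting hyperplane of it can be written as $H=\{w^*\mid w^*(v)=1\}$ for some $v\in\mathcal S_x$ (after rescaling $v$). Thus
\[
F^*=\{w^*\in\mathcal S^*_x\mid w^*(v)=1\}.
\]
Using $\mathcal S^*_x=d\log\ell_x(\pml(S))$, this says that $F^*$ consists exactly of the images $d\log\ell_x([\mu])$ of the projective measured laminations $[\mu]$ that are maximally infinitesimally stretched by $v$, i.e. those with $d\log\ell_x([\mu])(v)=1$.

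Next we identify this set. By \cref{face}, $v$ lies in the interior of a unique face $F_\nu$, where $\nu$ is a chain-recurrent geodesic lamination. Then $v=\alpha_1\v_{\nu_1}+\dots+\alpha_k\v_{\nu_k}$ with all $\alpha_j>0$, and $\nu$ is the largest chain-recurrent lamination common to the maximal laminations $\nu_j$.

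We claim that $d\log\ell_x([\mu])(v)=1$ if and only if $|\mu|\subset\nu$. For the ``if'' part, each $\v_{\nu_j}$ stretches every measured lamination supported in $\nu_j$ at maximal rate, so $\v_{\nu_j}(\ell(\mu))=\ell_x(\mu)$ whenever $|\mu|\subset\nu\subset\nu_j$; summing with the weights $\alpha_j$ gives the equality for $v$. For the ``only if'' part, suppose $|\mu|\not\subset\nu$. Then the measured (hence recurrent) lamination $|\mu|$ is not contained in some $\nu_j$. Since $\nu_j$ is maximal, $|\mu|$ must intersect $\nu_j$ transversely, and \cite[Lemma 2.7]{BOP} gives $\v_{\nu_j}(\ell(\mu))<\ell_x(\mu)$, so $v(\ell(\mu))<\ell_x(\mu)$. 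This is the same argument as in the proof of \cref{codimension of face}.

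Now let $\lambda_0$ be the union of all supports of measured laminations contained in $\nu$. This is the measurable part of $\nu$, namely the union of its minimal components together with the simple closed leaves, and it is itself the support $|\lambda|$ of a measured lamination, obtained by putting a positive measure on each of these components. Then $\{[\mu]\mid |\mu|\subset\nu\}=\{[\mu]\mid |\mu|\subset|\lambda|\}$, and hence $F^*=F^*_{|\lambda|}$. We also need $F^*$ to be nonempty. This follows because $\nu$ always contains at least one minimal component, which carries a transverse measure, so $|\lambda|\neq\emptyset$.

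The step we expect to need the most care is the strict inequality in the claim. Here we have to check that a measured lamination not contained in $\nu$ cannot sit inside every $\nu_j$. This holds because $\nu$ is the largest common chain-recurrent sublamination of the $\nu_j$, and any measured sublamination common to all $\nu_j$ is chain-recurrent. The other point requiring care is the duality fact that every supporting hyperplane of the dual ball is of the form $w^*(v)=1$; this is standard for norms on finite-dimensional spaces.
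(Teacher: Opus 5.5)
Your proof is correct, but it is organised differently from the paper's.

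The paper argues by contraposition. By \cref{boundary face}, a face $F^*$ not of the form $F^*_{|\lambda|}$ is a proper subface of some $F^*_{|\lambda|}$ whose interior points are images of laminations with support exactly $|\lambda|$. Any $v\in\mathcal S_x$ orthogonal to $F^*$ therefore maximally stretches such a lamination, so it lies in $F_{|\lambda|}$ by the argument of \cref{codimension of face}. Hence $v$ is orthogonal to all of $F^*_{|\lambda|}\supsetneq F^*$, and no hyperplane cuts out exactly $F^*$.

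You instead compute the exposed face $\{w^*\in\mathcal S^*_x\mid w^*(v)=1\}$ directly for an arbitrary $v$. You identify it with $F^*_{\nu^c}$, where $F_\nu$ is the face containing $v$ in its relative interior. The ingredients are the same in both routes: \cref{face}, the fact that $\nu$ is the largest common chain-recurrent sublamination of the $\nu_j$, and \cite[Lemma 2.7]{BOP}.

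What your route buys is twofold. It does not need \cref{boundary face} at all. It also yields an explicit description of which exposed face each unit tangent vector determines. This in effect also proves the paper's last theorem, that the covectors orthogonal to $F_\nu$ are exactly those supported in $\nu^c$.

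The paper's version is shorter given its earlier results. It uses implicitly the same duality step that you state explicitly: an exposing hyperplane has the form $w^*(v)=1$ with $v\in\mathcal S_x$. That step holds for the asymmetric norm $\Vert\cdot\Vert_\th$ too, by the bipolar theorem.

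A minor point: simple closed leaves of $\nu$ are already minimal components, so ``together with the simple closed leaves'' is redundant.
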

\begin{proof}
The \lq if ' part was already proved in \cite[Theorem 6.7]{HOP}.
We shall prove the \lq only if' part.
Let $F^*$ be a face of $\mathcal S_x^*$, and suppose that it does not have the form of $F^*_{|\lambda|}$.
Then by \cref{boundary face}, $F^*$ is contained in $F^*_{|\lambda|}$ such that the interior of $F^*$ consists of the images of projective measured laminations whose supports are equal to $|\lambda|$.
Let $v$ be a point on $\mathcal S_x$ which is orthogonal to $F^*$.
Since $v$ infinitesimally stretches the interior points of $F^*$, which corresponds to images of projective measured laminations whose supports are equal to $|\lambda|$,  it lies in $F_{|\lambda|}$.
Therefore $v$ is also orthogonal to $F^*_{|\lambda|}$, which properly contains $F^*$.
Thus we see that $F^*$ cannot be an exposed face.
%
%
%
%
\end{proof}

It is immediate from the definitions that for any face $F^*$ of $\mathcal S^*_x$, $\dim(F^*)+\codim(F^*) \leq 6g-7$.
In \cite[\S6.6]{HOP}, it was proved that the sum of dimension and codimension is equal to $6g-7$ for faces in $\mathcal S^*_x$ corresponding to projective classes of  weighted multi-curves.
Here we shall prove that this is indeed a necessary and sufficient condition.

\begin{theorem}
\label{sum}
For a projective measured lamination $[\lambda]$ the following two conditions are equivalent.
\begin{enumerate}[(1)]
\item $\dim F^*_{|\lambda|}+\codim F^*_{|\lambda|}=6g-7$.
\item $\lambda$ is a weighted multi-curve.
\end{enumerate}
\end{theorem}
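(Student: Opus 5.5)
Since $d\log\ell_x$ restricts to a homeomorphism from $PM(|\lambda|)$ onto $F^*_{|\lambda|}$ (as noted in the proof of the preceding proposition), and these faces are convex, I will take $\dim F^*_{|\lambda|}=\dim M(|\lambda|)-1$. This uses that the cone $M(|\lambda|)$ is a finite-dimensional polyhedral-type cone whose linear span has dimension $m(|\lambda|)$, the number of ergodic transverse measures on $|\lambda|$. This number is the sum over minimal components of their numbers of ergodic measures. I would justify the affine dimension by noting that $d\ell_x$ is linear on $M(|\lambda|)$, because lengths of measured laminations with a common support are additive in the measure. I would also note that $d\ell_x$ is injective on the span, because $d\log\ell_x$ is injective on $\pml(S)$. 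On the other side, \cref{codimension of face} gives $\codim F^*_{|\lambda|}=6g-6-C(|\lambda|)$. Condition (1) is therefore equivalent to
$$C(|\lambda|)=\dim M(|\lambda|)=m(|\lambda|).$$

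The next step is to compute $C(|\lambda|)$. The support $|\lambda|$ is a measured, hence chain-recurrent, lamination. By \cref{dim tangential}, $C(|\lambda|)=\dim M(\tau)$ for a train track $\tau$ adapted to $|\lambda|$. So I need to compare $\dim M(\tau)$, the full weight space of the adapted train track, with the dimension of the space of transverse measures actually supported on $|\lambda|$. I plan to argue componentwise. If a minimal component $\lambda_0$ is a simple closed curve, its adapted track can be taken to be a single closed branch (a circle), so $\dim M=1=m(\lambda_0)$. If $\lambda_0$ is not a closed curve, it fills a subsurface $\Sigma_0$ that is not an annulus. An adapted train track for $\lambda_0$ is then a recurrent track filling $\Sigma_0$ whose complementary regions mirror those of $\lambda_0$. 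The dimension of $M(\tau)$ equals the dimension of the measured lamination space of $\Sigma_0$ restricted to laminations carried by $\tau$, which is the number of branches minus the number of switches. This number is at least $2$, since such a track carries many closed curves, and it is strictly larger than $m(\lambda_0)$. To prove the strict inequality I would use that a generic weight on $\tau$, after splitting along $\lambda_0$, defines a measured lamination carried by $\tau$ that is not supported on $\lambda_0$, for instance a closed curve carried by $\tau$. This shows that $M(|\lambda_0|)$ is a proper face of $M(\tau)$, so its span has dimension less than $\dim M(\tau)$. Adding over components, with additivity of both quantities over the disjoint components, gives $C(|\lambda|)\ge m(|\lambda|)$, with equality if and only if every component is a closed curve.

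Combining the two steps gives the equivalence: (2) gives equality, recovering the result of \cite[\S6.6]{HOP}, and failure of (2) gives strict inequality, so (1) fails. The main obstacle is the strict inequality $\dim M(\tau)>m(\lambda_0)$ for a non-closed minimal component. The delicate point is to show that the carried measured laminations not supported on $\lambda_0$ really increase the dimension of the span, rather than lying inside the span of measures on $\lambda_0$. My first attempt would be to use that $M(\tau)$ is a polyhedral cone and that the measures on $\lambda_0$ form the face of those weights realised only by $\lambda_0$. Closed curves carried by $\tau$ approximate $\lambda_0$ but lie off this face, so they are not in its span.
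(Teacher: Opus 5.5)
Your reduction is the same as the paper's. You use $\dim F^*_{|\lambda|}=\dim PM(|\lambda|)$ and $\codim F^*_{|\lambda|}=6g-6-C(|\lambda|)$ from \cref{codimension of face}, so that (1) becomes $C(|\lambda|)=\dim M(|\lambda|)$. You then test this componentwise through the weight space of an adapted track via \cref{dim tangential}. The multi-curve case is fine.

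The gap is in the step you yourself single out: the strict inequality $m(\lambda_0)<\dim M(\tau)$ for a minimal component $\lambda_0$ that is not a closed curve. The argument you propose for it fails, for two reasons.

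First, the weights induced by transverse measures on $\lambda_0$ do \emph{not} form a proper face of $M(\tau)$. Since $\tau$ is adapted to $\lambda_0$, every branch carries a leaf of $\lambda_0$. Every nonzero transverse measure on the minimal lamination $\lambda_0$ has full support, so its weight is strictly positive on every branch. It therefore lies in the relative interior of $M(\tau)$, not on a proper face.

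Second, the inference ``closed curves carried by $\tau$ lie off this cone, hence are not in its span'' is a non sequitur. A vector outside a convex cone can lie in its linear span; every vector does when the cone is full-dimensional. So exhibiting carried laminations not supported on $\lambda_0$ gives, by itself, no bound on the dimension.

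What is missing is a density argument, which is what the paper's appeal to rational weights encodes.
\begin{enumerate}[(a)]
\item The weight map is linear on $M(|\lambda_0|)$ and injective on its span. Indeed, $M(|\lambda_0|)$ is a simplicial cone on the ergodic measures, and a carried measured lamination is determined by its weights. Splitting a kernel vector into positive and negative parts would give either a nonzero measure with zero weights or two distinct measures with equal weights, both impossible.
\item Hence the image $K$ is a cone of dimension $m(\lambda_0)$ in the span $V(\tau)$ of $M(\tau)$.
\item If $m(\lambda_0)=\dim M(\tau)$, then $K$ has nonempty interior in $V(\tau)$.
\item $V(\tau)$ is cut out by switch equations with integer coefficients, so its rational points are dense. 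Thus $K$ contains a nonzero integral weight.
\item That weight is realised by a weighted multi-curve carried by $\tau$. By uniqueness of the carried measured lamination with prescribed weights, this multi-curve would be supported in $\lambda_0$. This is impossible, since $\lambda_0$ is minimal and not a closed curve.
\end{enumerate}
This gives $m(\lambda_0)<C(\lambda_0)$, and your additivity over components then finishes the argument.

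A similar short splitting argument, using $d\ell_x(\mu)(v)=\ell_x(\mu)$ for $v\in F_{|\lambda|}$, is needed to justify your claim that $d\ell_x$ is injective on the span of $M(|\lambda|)$. However, your first observation already gives $\dim F^*_{|\lambda|}=\dim PM(|\lambda|)$ without it: $d\log\ell_x$ is a homeomorphism from $PM(|\lambda|)$ onto the convex set $F^*_{|\lambda|}$.
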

\begin{proof}
Since the implication from (2) to (1) was already proved in \cite{HOP}, we have only to show that (1) implies (2).
Let $\tau$ be a recurrent train track adapted to $\lambda$.
Since $\lambda$ is not a weighted multi curve, we can find a component $\lambda_0$ which is not a simple closed curve and a component $\tau_0$ of $\tau$ adapted to $\lambda_0$.

Then it is easy to see that  $\tau_0$ carries measured laminations whose supports are not equal to $|\lambda_0|$: indeed  rational weights carry weighted multi-curves, whose supports cannot be $|\lambda_0|$.
Therefore $\dim M(|\lambda_0|) < C(|\lambda_0|)$, and hence $\dim M(|\lambda|) < C(|\lambda|)$ and $PM(|\lambda|) < C(|\lambda|)-1$.
It follows that $\dim(F^*_{|\lambda|})+\codim(F^*_{|\lambda|})=\dim PM(|\lambda|)+6g-6-C(|\lambda|)<6g-7.$
\end{proof}

Now, we return to considering the unit tangent sphere $\mathcal S_x$.
We shall describe  codimensions of  faces of $\mathcal S_x$ in terms of  corresponding chain-recurrent geodesic laminations.
For a chain recurrent geodesic lamination $\lambda$, recall that the union of its minimal components is denoted by  $\lambda^c$.
Then, we can express codimensions of faces as follows.

\begin{theorem}
Let $F$ be a face of $\mathcal S_x$.
Let $\lambda$ be a chain-recurrent geodesic lamination such that $F=F_{\lambda}$, whose existence is guaranteed by \cref{face}.
Then we have $\codim(F)=\dim PM(\lambda^c)$,
\end{theorem}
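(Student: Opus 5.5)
Following \cref{codimension}, $\codim(F_\lambda)$ is the dimension of the set of covectors $w^*\in\mathcal S^*_x$ with $w^*(v)=1$ for every $v\in F_\lambda$. Since $d\log\ell_x$ identifies $\pml(S)$ with $\mathcal S^*_x$, the plan is to identify this set of covectors with $d\log\ell_x(PM(\lambda^c))$, where $PM(\lambda^c)$ is the space of projective transverse measures supported on $\lambda^c$. Then, because $d\log\ell_x$ is a homeomorphism onto $\mathcal S_x^*$ and its restriction to $PM(\lambda^c)$ is onto the face $F^*_{|\lambda^c|}$ (so has the same dimension), we get $\codim(F)=\dim PM(\lambda^c)$.

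For the inclusion $d\log\ell_x(PM(\lambda^c))\subset F^\perp$, take a measured lamination $\mu$ with $|\mu|\subset\lambda^c\subset\lambda$. By \cref{face}, every $v\in F_\lambda$ is a convex combination of stretch vectors $\v_{\lambda_j}(x)$ with $\lambda_j\supset\lambda\supset|\mu|$. Each such stretch vector stretches $|\mu|$ with infinitesimal factor $1$, so $d\log\ell_x([\mu])(\v_{\lambda_j}(x))=1$, and by linearity $d\log\ell_x([\mu])(v)=1$. Equivalently, I would invoke the ``if'' direction of \cref{codimension of face} with $|\lambda^c|$ in place of $|\lambda|$, after noting that $F_\lambda\subset F_{\lambda^c}$ because any maximal lamination containing $\lambda$ contains $\lambda^c$.

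The reverse inclusion is the main step. Let $[\mu]\in\pml(S)$ satisfy $d\log\ell_x([\mu])(v)=1$ for all $v\in F_\lambda$. Suppose $|\mu|\not\subset\lambda^c$. Every measured lamination has support a union of minimal laminations, so some minimal component $\mu_0$ of $|\mu|$ is not a component of $\lambda^c$. Since the isolated leaves of $\lambda$ carry no measure, $\mu_0$ is not contained in $\lambda$, and hence either $\mu_0$ meets $\lambda$ transversely or $\mu_0$ is disjoint from $\lambda$.
\begin{enumerate}[(a)]
\item If $\mu_0$ meets $\lambda$ transversely, then it meets every maximal $\lambda_j\supset\lambda$ transversely. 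By \cite[Lemma 2.7]{BOP}, $\v_{\lambda_j}(x)$ stretches $\mu_0$ strictly less than maximally, so $d\log\ell_x([\mu])(\v_{\lambda_j}(x))<1$, a contradiction.
\item If $\mu_0$ is disjoint from $\lambda$, I would extend $\lambda$ to a maximal chain-recurrent lamination $\lambda'$ that crosses $\mu_0$ transversely. This should be possible using the extension procedure of \cite[p.~50]{ThM} inside the complementary region containing $\mu_0$, adding leaves that spiral onto $\lambda$. Then $\v_{\lambda'}(x)\in F_\lambda$, and the same lemma gives a value $<1$, again a contradiction.
\end{enumerate}
Hence $|\mu|\subset\lambda^c$ and $[\mu]\in PM(\lambda^c)$.

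The obstacle I expect is case (b): I need to guarantee that a maximal chain-recurrent extension of $\lambda$ can be made to cross a prescribed disjoint minimal lamination, for instance by spinning a maximal completion around $\mu_0$. I also need \cite[Lemma 2.7]{BOP} to give strict inequality for the combined measure on $|\mu|$ and not only on $\mu_0$. This follows because the components of $|\mu|$ contained in $\lambda$ contribute a stretch factor exactly $1$, while $\mu_0$ contributes strictly less.
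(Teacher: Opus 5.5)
Your proposal is correct and follows the paper's proof. Both inclusions are argued as you do: the ``if'' direction because every stretch vector along a maximal extension of $\lambda$ stretches measured laminations supported in $\lambda^c$ with factor $1$, and the ``only if'' direction by applying \cite[Lemma 2.7]{BOP} to the stretch vectors in $F_\lambda$ and then noting that a measured lamination contained in $\lambda$ lies in $\lambda^c$. The extension fact you flag in case (b), that some maximal chain-recurrent lamination containing $\lambda$ crosses any measured lamination not contained in $\lambda$, is used tacitly in the paper as well (it simply asserts $|\mu|\subset\lambda$), so your case analysis is a more explicit version of the same argument.
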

\begin{proof}
Let $w=d\log\ell_x([\mu])$ be a point in $\mathcal S^*_x$ orthogonal to $F$.
Since $F$ contains every stretch vector with respect to a maximal chain-recurrent geodesic lamination containing $\lambda$, the lamination $\mu$ must be contained in $\lambda$ by \cite[Lemma 2.7]{BOP}.
Since $\mu$ is a measured lamination, this implies it is contained in $\lambda^c$.

Conversely if $\mu$ is contained in $\lambda^c$, it is infinitesimally stretched most by any stretch vector in $F_\lambda$, which corresponds to a maximal chain-recurrent geodesic lamination containing $\lambda$.
Therefore, the co-vectors orthogonal to $F$ constitute the set $\{d\log\ell_x([\mu])) \in \mathcal S^*_x\mid |\mu| \subset \lambda^c\}$, and its dimension is equal to $\dim PM(\lambda^c)$.
\end{proof}

\bibliography{dim.bib}
\bibliographystyle{acm}
\end{document}